\documentclass[11pt]{article}
\usepackage{amsmath,amssymb,amsthm,amscd}
\usepackage{xcolor}
\usepackage[all]{xy}

\usepackage{graphics}

\definecolor{grn}{rgb}{0,0.3,0}
\newtheorem{theorem}{Theorem}[section]
\newtheorem{lemma}[theorem]{Lemma}
\newtheorem{proposition}[theorem]{Proposition}

\theoremstyle{plain}

\theoremstyle{definition}

\numberwithin{equation}{section}

\renewcommand{\theenumi}{(\roman{enumi})}
\renewcommand{\labelenumi}{\textup{(\theenumi)}}

\title{
 Topological entropy in continuous orbit equivalence 
 of one-sided topological Markov shifts \\
}
\author{Kengo Matsumoto \\
Department of Mathematics \\
Joetsu University of Education \\
Joetsu, Niigata, 943-8512  JAPAN
\and
Hiroki Matui \\
Graduate School of Mathematical Sciences \\
The University of Tokyo \\
3-8-1 Komaba, Tokyo, 153-8914 JAPAN}

\begin{document}

\maketitle

\date{}

\def\det{{{\operatorname{det}}}}

%\maketitle
\begin{abstract} 
In this paper, 
we prove that the continuous orbit equivalence class of a one-sided topological Markov shift
contains a one-sided topological Markov shift whose topological entropy 
is greater than an arbitrary prescribed positive real number,
and also contains a one-sided topological Markov shift
whose topological entropy is less than an arbitrary prescribed positive real number. 
\end{abstract}

{\it Mathematics Subject Classification}:
Primary 37B10, 37B40; Secondary 37A20, 46L80. 

{\it Keywords and phrases}: topological entropy, topological Markov shift, 
continuous orbit equivalence,  Cuntz--Krieger algebra  
  
%\tableofcontents

\newcommand{\Ker}{\operatorname{Ker}}
\newcommand{\sgn}{\operatorname{sgn}}
\newcommand{\Ad}{\operatorname{Ad}}
\newcommand{\ad}{\operatorname{ad}}
\newcommand{\orb}{\operatorname{orb}}
\newcommand{\rank}{\operatorname{rank}}
\newcommand{\BF}{\operatorname{BF}}
\newcommand{\topology}{\operatorname{top}}

\def\Re{{\operatorname{Re}}}
\def\det{{{\operatorname{det}}}}
\newcommand{\K}{\operatorname{K}}

\newcommand{\sqK}{\operatorname{K}\!\operatorname{K}}

\newcommand{\bbK}{\mathbb{K}}
\newcommand{\N}{\mathbb{N}}
\newcommand{\bbC}{\mathbb{C}}
\newcommand{\R}{\mathbb{R}}
\newcommand{\Rp}{{\mathbb{R}}^*_+}
\newcommand{\T}{\mathcal{T}}
\newcommand{\bbT}{\mathbb{T}}

\newcommand{\Z}{\mathbb{Z}}
\newcommand{\Zp}{{\mathbb{Z}}_+}
\def\AF{{{\operatorname{AF}}}}

\def\TorZ{{{\operatorname{Tor}}^\Z_1}}
\def\Ext{{{\operatorname{Ext}}}}
\def\Exts{\operatorname{Ext}_{\operatorname{s}}}
\def\Extw{\operatorname{Ext}_{\operatorname{w}}}
\def\Ext{{{\operatorname{Ext}}}}
\def\Free{{{\operatorname{Free}}}}

\def\OA{{{\mathcal{O}}_A}}
\def\ON{{{\mathcal{O}}_N}}
\def\OAT{{{\mathcal{O}}_{A^t}}}

\def\whatA{{\widehat{\A}}}
\def\whatB{{\widehat{\B}}}

\def\A{{\mathcal{A}}}
\def\B{{\mathcal{B}}}
\def\C{{\mathcal{C}}}
\def\D{{\mathcal{D}}}
\def\E{\mathcal{E}}
\def\F{{\mathcal{F}}}
\def\G{{\mathcal{G}}}
\def\O{{\mathcal{O}}}
\def\V{{\mathcal{V}}}

\def\OB{{{\mathcal{O}}_B}}
\def\OTA{{{\mathcal{O}}_{\tilde{A}}}}
\def\FA{{{\mathcal{F}}_A}}
\def\PA{{{\mathcal{P}}_A}}
\def\PI{{{\mathcal{P}}_\infty}}
\def\OI{{{\mathcal{O}}_\infty}}

\def\calI{\mathcal{I}}
\def\calK{\mathcal{K}}
\def\calP{\mathcal{P}}
\def\calQ{\mathcal{Q}}
\def\calR{\mathcal{R}}
\def\calC{\mathcal{C}}
\def\calD{\mathcal{D}}
\def\calM{\mathcal{M}}

\def\bbC{{\mathbb{C}}}

\def\U{{\mathcal{U}}}
\def\OF{{{\mathcal{O}}_F}}
\def\DF{{{\mathcal{D}}_F}}
\def\FB{{{\mathcal{F}}_B}}
\def\DA{{{\mathcal{D}}_A}}
\def\DB{{{\mathcal{D}}_B}}
\def\DZ{{{\mathcal{D}}_Z}}

\def\End{{{\operatorname{End}}}}

\def\Ext{{{\operatorname{Ext}}}}
\def\Hom{{{\operatorname{Hom}}}}

\def\Tor{{{\operatorname{Tor}}}}

\def\Max{{{\operatorname{Max}}}}

\def\max{{{\operatorname{max}}}}

\def\GCD{{{\operatorname{GCD}}}}

\def\KMS{{{\operatorname{KMS}}}}
\def\Per{{{\operatorname{Per}}}}
\def\Out{{{\operatorname{Out}}}}
\def\Aut{{{\operatorname{Aut}}}}
\def\Ad{{{\operatorname{Ad}}}}
\def\Inn{{{\operatorname{Inn}}}}
\def\Int{{{\operatorname{Int}}}}
\def\det{{{\operatorname{det}}}}
\def\exp{{{\operatorname{exp}}}}
\def\nep{{{\operatorname{nep}}}}
\def\sgn{{{\operatorname{sign}}}}
\def\cobdy{{{\operatorname{cobdy}}}}
\def\Ker{{{\operatorname{Ker}}}}
\def\Coker{{{\operatorname{Coker}}}}
\def\Im{{\operatorname{Im}}}

\def\top{{{\operatorname{top}}}}
\def\ind{{{\operatorname{ind}}}}
\def\Ind{{{\operatorname{Ind}}}}
\def\id{{{\operatorname{id}}}}
\def\supp{{{\operatorname{supp}}}}
\def\co{{{\operatorname{co}}}}
\def\scoe{{{\operatorname{scoe}}}}
\def\coe{{{\operatorname{coe}}}}
\def\COE{{{\operatorname{COE}}}}
\def\I{{\mathcal{I}}}
\def\Span{{{\operatorname{Span}}}}

\def\event{{{\operatorname{event}}}}
\def\Proj{{{\operatorname{Proj}}}}
\def\S{\mathcal{S}}

\def\whatOA{\widehat{\mathcal{O}}_A}
\def\whatOAT{\widehat{\mathcal{O}}_{A^t}}

\def\wtO{\widetilde{O}}
\def\wtA{\widetilde{A}}

\def\opE{\operatorname{E}}

\def\Cokern{\Coker(I_n - \widetilde{A}_{C_n})}
\def\Cokernplus1{\Coker(I_{n+1} - \widetilde{A}_{C_{n+1}})}

\newcommand{\SN}{\operatorname{SN}}

\def\wA2{\widehat{A^{[2]}}}
\def\Tr{{{\operatorname{Tr}}}}
\def\Sp{{\operatorname{SP}}}
\def\Spx{{{\operatorname{Sp}}^\times}}
%\newpage

%%%%%%%%%%%%%%%%%%%%%%%%%%%%%%%%%%%%%%%

\bigskip

%%%%%%%%%%%%%%%%%%%%%%%%%%%%%%%%%%%%%%%%%%%%
\section{Introduction}
%%%%%%%%%%%%%%%%%%%%%
F. Sugisaki in \cite{SugisakiI} and \cite{SugisakiII}
proved that any Cantor minimal system is strongly orbit equivalent to Cantor minimal 
systems of all entropies (\cite{SugisakiI} for finite entropies,
\cite{SugisakiII} for infinite entropy). His results gave a complete answer 
for the conjecture raised by Boyle--Handelman \cite{BH1994} (cf. \cite{Ormes}).  
Inspired by Sugisaki's results, in this paper,
we study a relationship between topological entropy and continuous orbit equivalence 
of one-sided topological Markov shifts. 
Cantor minimal systems are minimal homeomorphisms of Cantor sets.
On the contrary, one-sided topological Markov shifts are surjective continuous maps having many periodic points.
Let $A=[A(i,j)]_{i,j=1}^N$ be an irreducible non-permutation matrix with entries in $\{0,1\}$.
The one-sided topological Markov shift $(X_A,\sigma_A)$ is defined to be a shift transformation
$\sigma_A$ on the shift space $X_A$
consisting of all right one-sided sequences $(x_n)_{n\in \N}$ of $x_n \in \{1,\dots,N\}$
satisfying $A(x_n, x_{n+1}) =1$  for all $n \in \N$. The shift $\sigma_A$ on $X_A$ is defined by
$\sigma_A((x_n)_{n \in \N}) = (x_{n+1})_{n \in \N}$.
By equipping $X_A$ with the relative topology of the infinite product topology on $\{1,\dots, N\}^\N$
of discrete set $\{1,\dots,N\}$, 
the shift space $X_A$ is homeomorphic to a Cantor discontinuum.
In \cite{MaPacific}, the notion of continuous orbit equivalence 
in one-sided topological Markov shifts was introduced in the following way.
Two one-sided topological Markov shifts
$(X_A,\sigma_A)$ and $(X_B,\sigma_B)$ are {\it continuously orbit equivalent,}\/ 
written as $(X_A, \sigma_A) \underset{\COE}{\sim}(X_B,\sigma_B)$,
if there  exist
a homeomorphism $h: X_A \to X_B$ and continuous maps 
$k_1, l_1: X_A \to \Zp=\{ 0,1,\dots,\}$ and 
$k_2, l_2: X_B \to \Zp$
satisfying 
\begin{align*}
\sigma_B^{k_1(x)}(h(\sigma_A(x))) = & \sigma_B^{l_1(x)}(h(x))\quad \text{ for } x \in X_A, \\
\sigma_A^{k_2(y)}(h^{-1}(\sigma_B(y))) = & \sigma_A^{l_2(y)}(h^{-1}(y))\quad \text{ for } y \in X_B.
\end{align*}
In \cite{MMKyoto},
the classification of continuous orbit equivalence was completed so that we know
$(X_A, \sigma_A) \underset{\COE}{\sim}(X_B,\sigma_B)$
if and only if the associated Cuntz--Krieger algebras $\OA, \OB$ are isomorphic and
$\det(I_N - A) = \det(I_M-B)$,
where
the matrix size of $A$ is $N$ and that of $B$ is $M$, and $I_N, I_M$ 
denote the identity matrices, respectively. 
Let us denote by 
$
\BF(A^t)
$
the quotient group
$\Z^N/(I_N -A^t)\Z^N$, which is well-known as the Bowen--Franks group for the matrix
$A^t$ (\cite{BF}, cf. \cite{Kitchens}, \cite{LM}, etc.).
Since 
the isomorphism class of $\OA$ is completely determined by 
its $\K$-group together with the position $[1_\OA]$ of the unit $1_{\OA}$ of $\OA$ 
in $\K_0(\OA)$ by  
\cite{Ro},
and $(\K_0(\OA), [1_\OA])\cong (\BF(A^t), [1_N])$ by  
\cite{Cuntz80},
we see that the triplet
$(\BF(A^t), [1_N], \det(I_N-A))$
is a complete set of invariants of the continuous orbit equivalence of $(X_A, \sigma_A)$,
where $[1_N]$ denotes the class of the vector $(1,\dots,1)$ in the quotient group
$\BF(A^t)$.

On the other hand, it is well-known as Parry's theorem \cite{Parry}
that the topological entropy $h_{\operatorname{top}}(X_A, \sigma_A)$
of a topological Markov shift $(X_A,\sigma_A)$ is computed to be $\log \lambda_A$, 
where $\lambda_A$ is the Perron--Frobenius eigenvalue of the underlying matrix $A$
(cf. \cite[Theorem 4.3.1]{LM}, \cite[Observation 1.4.2]{Kitchens})).

  In this paper, 
we prove that the continuous orbit equivalence class of a one-sided topological Markov shift
contains a one-sided topological Markov shift whose topological entropy 
is greater than an arbitrary prescribed positive real number (Theorem \ref{thm:main1}),
and also contains 
a one-sided topological Markov shift
whose topological entropy is less than an arbitrary prescribed positive real number 
(Theorem \ref{thm:main2}).  

In what follows, we denote by $\Zp$ and $\N$ the set of nonnegative integers
and the set of positive integers, respectively.  
%%%%%%%%%%%%%%%%%%%%%%%%%%%%%%%%%%%
\section{Upper Entropy}\label{sec:Upper}
%%%%%%%%%%%%%%%%%%%%%%%%%%%%%%%%%%
In this section,
we study upper  entropies in continuous orbit equivalence class.
  We have to provide a couple of notions and lemmas to show Theorem \ref{thm:main1}.
An essential matrix means a matrix for which none of its rows or columns is zero.

Let $A=[A(i,j)]_{i,j=1}^N$
be an $N\times N$ essential matrix  with entries in nonnegative integers.
A directed graph $\G_A=(\V_A, \E_A)$ naturally associates to the matrix $A$ in the following way.
The vertex set $\V_A$ is defined as $\{1,\dots,N\}$.
For two vertices $i,j \in \V_A$ with $A(i,j) \ne 0$,
define $A(i,j)$ multiple directed edges from $i$ to $j$. 
The set of such edges is the edge set $\E_A$.
If $\alpha \in \E_A$ is a directed edge from $i$ to $j$,
we write $s(\alpha) = i$ and $t(\alpha) =j$.   
We then have a matrix 
$A^{[2]}=[A^{[2]}(\alpha,\beta)]_{\alpha,\beta \in \E_A}$
with entries in $\{0,1\}$ such that 
$A^{[2]}(\alpha,\beta) =1$ if $t(\alpha) = s(\beta)$, otherwise $0$.
The matrix $A^{[2]}$ is called the second higher block matrix for $A$ (cf. \cite[Section 2.4]{LM}).
Define $\V_A\times \E_A$-matrix $D=[D(i,\alpha)]_{i \in \V_A, \alpha\in \E_A}$ and 
$\E_A\times \V_A$-matrix $E=[E(\beta,j)]_{\beta\in \E_A, j \in \V_A}$ by setting  for $i,j\in \V_A$ and $\alpha,\beta \in \E_A$
\begin{equation}\label{eq:DE}
D(i,\alpha)=
\begin{cases}
1 & \text{ if } i = s(\alpha), \\
0 & \text{ otherwise}
\end{cases}
\quad
\text{ and }
\quad
E(\beta,j)=
\begin{cases}
1 & \text{ if } j = t(\beta), \\
0 & \text{ otherwise,}
\end{cases}
 \end{equation}
so that $A= DE, \, A^{[2]} = ED$.
The matrix $D$ is called 
a division matrix which is a rectangular matrix with entries in $\{0,1\}$ and with exactly one $1$ 
in each column and at least one $1$ in each row, and
$E$ is called an amalgamation matrix  
which is a rectangular matrix with entries in $\{0,1\}$ and with exactly one $1$ 
in each row and at least one $1$ in each column.
Let $M$ be the cardinal number $|\E_A|$ of the edge set $\E_A$.
The following lemma is well-known (cf. \cite{LM}).
\begin{lemma}\label{lem:Outsplit}
\begin{enumerate}
\renewcommand{\theenumi}{(\roman{enumi})}
\renewcommand{\labelenumi}{\textup{\theenumi}}
\item
The map
$x \in \Z^N \to D^t x \in \Z^M$ induces an isomorphism
 $ \Phi_{A}:[x] \in \BF(A^t) \to [D^t x] \in \BF({A^{[2]}}^t)$
such that $\Phi_{A}([1_N]) = [1_M]$. 
\item $\det(I_N - A) = \det(I_M - A^{[2]})$ and $\lambda_A = \lambda_A^{[2]}$.
\end{enumerate}
Hence the Cuntz--Krieger algebras $\OA$ and $\O_{A^{[2]}}$ are isomorphic and
$(X_A, \sigma_A) \underset{\COE}{\sim}(X_{A^{[2]}},\sigma_{A^{[2]}})$.
\end{lemma}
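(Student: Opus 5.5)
The whole argument rests on the factorizations $A = DE$ and $A^{[2]} = ED$, together with the elementary relations between $D$, $E$ and the all-ones vectors. Write $1_N\in\Z^N$ and $1_M\in\Z^M$ for these vectors. Since every column of $D$ contains exactly one $1$, we have $D^t 1_N = 1_M$. This identity will give the statement about units at once.

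For (i), transposing the factorizations gives $A^t = E^tD^t$ and ${A^{[2]}}^t = D^tE^t$. We first check that $x\mapsto D^t x$ descends to the quotients. For $x\in\Z^N$,
\[
D^t(I_N - A^t)x = D^tx - D^tE^tD^tx = (I_M - {A^{[2]}}^t)D^t x,
\]
so $D^t$ carries $(I_N-A^t)\Z^N$ into $(I_M-{A^{[2]}}^t)\Z^M$. In the same way $E^t(I_M-{A^{[2]}}^t) = (I_N - A^t)E^t$, so $y\mapsto E^t y$ induces a map $\Psi_A:\BF({A^{[2]}}^t)\to\BF(A^t)$.

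We then show that $\Phi_A$ and $\Psi_A$ are mutually inverse. One composition gives $E^tD^t x = A^t x \equiv x$ modulo $(I_N-A^t)\Z^N$, because $x - A^tx = (I_N-A^t)x$. The other gives $D^tE^t y = {A^{[2]}}^t y\equiv y$ modulo $(I_M-{A^{[2]}}^t)\Z^M$ for the same reason. Hence $\Phi_A$ is an isomorphism, and $\Phi_A([1_N]) = [D^t1_N] = [1_M]$.

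For (ii), we use the general identity $\det(I_N - DE) = \det(I_M - ED)$, valid for rectangular $D$ of size $N\times M$ and $E$ of size $M\times N$. One proof multiplies out the block matrix $\begin{bmatrix} I_N & D\\ E & I_M\end{bmatrix}$ in its two LU-type factorizations. Alternatively, $DE$ and $ED$ have the same nonzero eigenvalues with the same algebraic multiplicities, i.e. $t^M\det(tI_N-DE)=t^N\det(tI_M-ED)$; setting $t=1$ gives the determinant identity.

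The eigenvalue statement gives $\lambda_A=\lambda_{A^{[2]}}$, provided we know the Perron--Frobenius eigenvalue is the spectral radius and is positive. Positivity holds because $A$ is essential and nonzero, and $\lambda_A$ is the spectral radius of the nonnegative matrix $A$. More concretely, if $Av=\lambda v$ with $v>0$, then $A^{[2]}(Ev)=EDEv=\lambda Ev$, and $Ev>0$ since $E$ has a $1$ in every row. So $\lambda_A$ is an eigenvalue of $A^{[2]}$ with a positive eigenvector, hence its Perron eigenvalue. This last point is the only one needing care: it requires either irreducibility of $A^{[2]}$, which is inherited from $A$, or simply the spectral-radius equality coming from the coincidence of nonzero spectra.

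Finally, the concluding assertions follow from the classification recalled in the introduction. By Rørdam \cite{Ro} and Cuntz \cite{Cuntz80}, the isomorphism of pointed Bowen--Franks groups in (i) gives $\OA\cong\O_{A^{[2]}}$. Combined with the equality of determinants in (ii), \cite{MMKyoto} then gives continuous orbit equivalence. This step uses that $A^{[2]}$ is an irreducible, non-permutation $\{0,1\}$-matrix whenever $A$ is. Alternatively, one can observe directly that $X_A$ and $X_{A^{[2]}}$ are conjugate via the higher block map, which is a continuous orbit equivalence with $k_i=0$ and $l_i=1$.
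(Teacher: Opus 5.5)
Your proof is correct. The paper gives no argument of its own for this lemma and only quotes it as well known (cf.\ \cite{LM}); what you wrote is exactly the standard argument behind that citation: the elementary strong shift equivalence $A=DE$, $A^{[2]}=ED$ intertwining $D^t$ and $E^t$, the identity $D^t1_N=1_M$ for the unit, and Sylvester's identity together with the coincidence of nonzero spectra for (ii). Your closing observation that the $2$-block map $X_A\to X_{A^{[2]}}$ is a conjugacy (so $k_i=0$, $l_i=1$) is worth keeping, since it gives the continuous orbit equivalence directly, without appealing to the classification results of \cite{Ro}, \cite{Cuntz80}, \cite{MMKyoto}.
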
\label{lem:Abar}

We shall use a slight variant of the construction in \cite{MaMathScan2018}.
Although the result needed below follows directly from \cite{MaMathScan2018},
the following explicit form of the matrix $\Bar{A}$ 
is useful because it keeps the original
matrix $A$ as the upper-left block.  
This makes the later reduction to the case
where the matrix has a non-zero diagonal entry more transparent.

\begin{lemma}[{cf. \cite{MaMathScan2018}}]
For an irreducible non-permutation matrix $A=[A(i,j)]_{i,j=1}^N$
 with entries in $\{0,1\}$,
let $\Bar{A}$ be the $(N+3)\times (N+3)$ matrix defined by 
\begin{equation*}
\Bar{A}
=
%{\small 
\setcounter{MaxMatrixCols}{15}
{\small 
\begin{bmatrix}
A(1,1)  &\cdots&A(1,N-1)  &A(1,N)    &0          &0     &0 \\ 
\vdots  &      &\vdots    &\vdots    &\vdots     &\vdots &\vdots \\ 
A(N-1,1)&\cdots&A(N-1,N-1)&A(N-1,N)  &0          &0     & 0 \\ 
A(N,1)  &\cdots &A(N,N-1) &A(N,N)    &0          &1     &0 \\ 
0       &\cdots &0        &1          &0          &0     &0   \\ 
0       &\cdots &0        &0          &1          &0     &1  \\ 
0       &\cdots &0        &0          &0          &1     &1    
\end{bmatrix}
}
=
\setcounter{MaxMatrixCols}{15}
{\small 
\begin{bmatrix}
         &       &           &           &0          &0     &0 \\ 
         &       &           &           &\vdots     &\vdots&\vdots \\ 
         &\text{\huge{A}}&           &           &0          &0     & 0 \\   
         &       &           &           &0          &1     &0 \\ 
0        &\cdots &0          &1          &0          &0     &0   \\ 
0        &\cdots &0          &0          &1          &0     &1  \\ 
0        &\cdots &0          &0          &0          &1     &1    
\end{bmatrix}.
}
\end{equation*}
Then there exists an isomorphism 
$\Psi: \BF({\Bar{A}}^t) \to \BF({A}^t)
$
such that 
\begin{equation}\label{eq:Psi}
\Psi([1_{N+3}]) = [1_N]
\quad 
\text{  and }
\quad 
\det(I_{N+3}-\Bar{A}) = - \det(I_N - A).
\end{equation}
\end{lemma}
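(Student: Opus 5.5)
We prove both claims by direct computation with the block structure of $\Bar{A}$. Write $e_1,\dots,e_{N+3}$ for the standard basis of $\Z^{N+3}$, and set $a=e_{N+1}$, $b=e_{N+2}$, $c=e_{N+3}$.

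\emph{Determinant.} Expand $\det(I_{N+3}-\Bar{A})$ along the last three rows and columns. The lower right $3\times 3$ block of $I-\Bar{A}$ is
\[
\begin{bmatrix} 1 & 0 & 0\\ -1 & 1 & -1\\ 0 & -1 & 0\end{bmatrix},
\]
whose determinant is $-1$. The coupling between this block and $I_N-A$ consists of only two entries: $-1$ in position $(N,N+2)$ and $-1$ in position $(N+1,N)$. I would eliminate them by row and column operations.
\begin{itemize}
\item Row $N+3$ of $I-\Bar{A}$ is $-e_{N+2}^t$. Adding a suitable multiple of it to row $N$ clears the $(N,N+2)$ entry and leaves the rest of row $N$ unchanged.
\item Row $N+3$ has its only nonzero entry in column $N+2$. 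Column $N+2$ therefore acts as a pivot, and we can clear column $N+2$ elsewhere.
\item Column $N+1$ has $1$ at $(N+1,N+1)$ and $-1$ at $(N+2,N+1)$. Row $N+1$ is $-e_N^t+e_{N+1}^t$. Column operations using column $N+1$ clear the $(N+1,N)$ entry, possibly changing only entries in rows $N+1$ and $N+2$ of column $N$. Those rows are then killed by the pivots in columns $N+1$ and $N+2$.
\end{itemize}
After these operations the matrix is block triangular, with blocks $I_N-A$ and a $3\times 3$ block of determinant $-1$. This gives $\det(I_{N+3}-\Bar{A})=-\det(I_N-A)$. I would double-check the sign with a Laplace expansion along row $N+3$, which gives $(-1)^{(N+3)+(N+2)}\cdot(-1)=+1$ times a minor, and then expand that minor along column $N+3$.

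\emph{Bowen--Franks group.} Work in $\BF(\Bar{A}^t)=\Z^{N+3}/(I-\Bar{A}^t)\Z^{N+3}$. The relations are $(I-\Bar{A}^t)e_i = e_i - \sum_j \Bar{A}(i,j)e_j\equiv 0$, one for each $i$. For $i=N+1,N+2,N+3$ they read
\[
a\equiv e_N,\qquad b\equiv a+c,\qquad c\equiv b+c.
\]
The last relation gives $b\equiv 0$, hence $c\equiv -a\equiv -e_N$. For $i\le N-1$ the relations are exactly those of $\BF(A^t)$. For $i=N$ the relation is $e_N\equiv\sum_j A(N,j)e_j + b$, and since $b\equiv 0$ this is again the original relation.

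So define $\Psi$ by $e_i\mapsto [e_i]$ for $i\le N$, together with $a\mapsto [e_N]$, $b\mapsto 0$ and $c\mapsto -[e_N]$. One checks that $\Psi$ kills every relation, so it is well defined. Its inverse is induced by the inclusion $\Z^N\to\Z^{N+3}$, which is well defined by the reduction above. Thus $\Psi$ is an isomorphism.

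Finally,
\[
\Psi([1_{N+3}])=[1_N]+[e_N]+0-[e_N]=[1_N].
\]

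The main obstacle is bookkeeping rather than ideas: getting the transpose convention right, so that the relations come from rows of $\Bar{A}$, and getting the sign in the determinant right.
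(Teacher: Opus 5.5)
Your proof is correct, but it is organized differently from the paper's.

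\textbf{The paper's route.} It runs a single chain of elementary row and column operations on the pair $(I_{N+3}-\Bar{A}^t,[1_{N+3}])$. The chain ends at $(I_N-A^t)\oplus\mathrm{diag}(1,-1,-1)$, with the vector still equal to $[1_{N+3}]$. Three things are read off from this one reduction: the Bowen--Franks isomorphism, the position of the unit, and the determinant (every operation except a final row exchange preserves it). The map $\Psi$ is never written down explicitly.

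\textbf{Your route.} You treat the two invariants separately.
\begin{enumerate}
\item For the determinant you work with $I_{N+3}-\Bar{A}$ itself. Subtracting row $N+3$ from row $N$ already makes it block lower triangular, with diagonal blocks $I_N-A$ and a $3\times 3$ block of determinant $-1$. Your Laplace-expansion check is also right.
\item For $\BF(\Bar{A}^t)$ you eliminate the generators $e_{N+1},e_{N+2},e_{N+3}$ using the relations $e_i\equiv\sum_j\Bar{A}(i,j)e_j$. You then give $\Psi$ explicitly, and its inverse as the map induced by the inclusion $\Z^N\to\Z^{N+3}$.
\end{enumerate}
This is the same elimination the paper performs, recorded as relations rather than matrix operations. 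Your version makes the well-definedness checks and the identity $\Psi([1_{N+3}])=[1_N]$ completely transparent. The paper's version is more mechanical and settles everything in one pass.

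\textbf{A harmless slip in your third bullet.} After you add column $N+1$ to column $N$, a new entry $-1$ appears at position $(N+2,N)$. The pivots in columns $N+1$ and $N+2$ do not clear it: row $N+1$ and row $N+3$ both have $0$ in column $N$, and column $N+2$ is supported only in row $N+3$. It is cleared by subtracting column $N+3$, which equals $-e_{N+2}$. That bullet is unnecessary anyway, because block triangularity already holds after your first operation.
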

\begin{proof}
The pair $(I_{N+3}-\Bar{A}^t, [1_{N+3}])$ of the matrix 
$I_{N+3}-\Bar{A}^t$ and the vector $[1_{N+3}]$ 
is transformed by elementary operations of matrices 
in the following way:
 {\allowdisplaybreaks
\begin{align*}
 &(I_{N+3}-\Bar{A}^t, [1_{N+3}])  \\
 = & 
\left(
{\small 
\begin{bmatrix}
         &       &           &           &0          &0     &0 \\ 
         &       &           &           &\vdots     &\vdots&\vdots \\ 
         &\text{\huge{I}}_{N}\text{\huge{-}}\text{\huge{A}}^{\text{\large{t}}}  &           &           &0          &0     & 0 \\   
         &       &           &           &-1         &0     &0 \\ 
0        &\cdots &0          &0          &1          &-1    &0   \\ 
0        &\cdots &0          &-1         &0          &1     &-1 \\ 
0        &\cdots &0          &0          &0          &-1    &0    
\end{bmatrix},
\begin{bmatrix}
1 \\
\vdots \\
1 \\
1 \\
1 \\
1 \\
1
\end{bmatrix}
}
\right) (\text{add } (N+3)\text{th column to } (N+2)\text{th column}) \\
\to &
\left(
{\small 
\begin{bmatrix}
         &       &           &           &0          &0     &0 \\ 
         &       &           &           &\vdots     &\vdots&\vdots \\ 
         &\text{\huge{I}}_{N}\text{\huge{-}}\text{\huge{A}}^{\text{\large{t}}}  &           &           &0          &0     & 0 \\   
         &       &           &           &-1         &0     &0 \\ 
0        &\cdots &0          &0          &1          &-1    &0   \\ 
0        &\cdots &0          &-1         &0          &0     &-1 \\ 
0        &\cdots &0          &0          &0          &-1    &0    
\end{bmatrix},
\begin{bmatrix}
1 \\
\vdots \\
1 \\
1 \\
1 \\
1 \\
1
\end{bmatrix}
}
\right) (\text{subtract } (N+3)\text{th column from } N\text{th column}) \\
\to &
\left(
{\small 
\begin{bmatrix}
         &       &           &           &0          &0     &0 \\ 
         &       &           &           &\vdots     &\vdots&\vdots \\ 
         &\text{\huge{I}}_{N}\text{\huge{-}}\text{\huge{A}}^{\text{\large{t}}} &           &           &0          &0     & 0 \\   
         &       &           &           &-1         &0     &0 \\ 
0        &\cdots &0          &0          &1          &-1    &0   \\ 
0        &\cdots &0          &0          &0          &0     &-1 \\ 
0        &\cdots &0          &0          &0          &-1    &0    
\end{bmatrix},
\begin{bmatrix}
1 \\
\vdots \\
1 \\
1 \\
1 \\
1 \\
1
\end{bmatrix}
}
\right) (\text{add } (N+1)\text{th row to } N\text{th row}) \\
\to &
\left(
{\small 
\begin{bmatrix}
         &      &           &           &0          &0     &0 \\ 
         &      &           &           &\vdots     &\vdots&\vdots \\ 
         &\text{\huge{I}}_{N}\text{\huge{-}}\text{\huge{A}}^{\text{\large{t}}} &           &           &0          &0     & 0 \\   
         &      &           &           &0          &-1    &0 \\ 
0        &\cdots &0          &0          &1          &-1    &0   \\ 
0        &\cdots &0          &0          &0          &0     &-1 \\ 
0        &\cdots &0          &0          &0          &-1    &0    
\end{bmatrix},
\begin{bmatrix}
1 \\
\vdots \\
1 \\
2 \\
1 \\
1 \\
1
\end{bmatrix}
}
\right) (\text{subtract } (N+3)\text{th row from } N\text{th row}) \\
\to &
\left(
{\small 
\begin{bmatrix}
         &       &           &           &0          &0     &0 \\ 
         &       &           &           &\vdots     &\vdots&\vdots \\ 
         &\text{\huge{I}}_{N}\text{\huge{-}}\text{\huge{A}}^{\text{\large{t}}}  &           &           &0          &0     & 0 \\   
         &       &           &           &0          &0     &0 \\ 
0        &\cdots &0          &0          &1          &-1    &0   \\ 
0        &\cdots &0          &0          &0          &0     &-1 \\ 
0        &\cdots &0          &0          &0          &-1    &0    
\end{bmatrix},
\begin{bmatrix}
1 \\
\vdots \\
1 \\
1 \\
1 \\
1 \\
1
\end{bmatrix}
}
\right)(\text{add } (N+1)\text{th column to  } (N+2)\text{th column}) \\
\to &
\left(
{\small 
\begin{bmatrix}
         &       &           &           &0          &0     &0 \\ 
         &       &           &           &\vdots     &\vdots&\vdots \\ 
         &\text{\huge{I}}_{N}\text{\huge{-}}\text{\huge{A}}^{\text{\large{t}}} &           &           &0          &0     & 0 \\   
         &       &           &           &0          &0     &0 \\ 
0        &\cdots &0          &0          &1          &0     &0   \\ 
0        &\cdots &0          &0          &0          &0     &-1 \\ 
0        &\cdots &0          &0          &0          &-1    &0    
\end{bmatrix},
\begin{bmatrix}
1 \\
\vdots \\
1 \\
1 \\
1 \\
1 \\
1
\end{bmatrix}
}
\right)(\text{exchange } (N+2)\text{th row and } (N+3)\text{th row}) \\
\to &
\left(
{\small 
\begin{bmatrix}
         &       &           &           &0          &0     &0 \\ 
         &       &           &           &\vdots     &\vdots&\vdots \\ 
         &\text{\huge{I}}_{N}\text{\huge{-}}\text{\huge{A}}^{\text{\large{t}}} &           &           &0          &0     & 0 \\   
         &       &           &           &0          &0     &0 \\ 
0        &\cdots &0          &0          &1          &0     &0   \\ 
0        &\cdots &0          &0          &0          &-1    &0  \\ 
0        &\cdots &0          &0          &0          &0     &-1   
\end{bmatrix},
\begin{bmatrix}
1 \\
\vdots \\
1 \\
1 \\
1 \\
1 \\
1
\end{bmatrix}
}
\right).
\end{align*}
}
By the above operations of matrices, we see that 
 there exists an isomorphism 
$\Psi: \BF({\Bar{A}}^t) \to \BF({A}^t)
$
satisfying \eqref{eq:Psi}.
Since all the operations except the final exchange of two rows preserve the determinant,
and the final exchange changes its sign, the determinant formula in \eqref{eq:Psi} follows.
\end{proof}
Therefore we have the following lemma.
\begin{lemma}\label{lem:doubleBarA}
For an irreducible non-permutation matrix $A=[A(i,j)]_{i,j=1}^N$
 with entries in $\{0,1\}$,
let $\Bar{\Bar{A}}$ be the $(N+6)\times (N+6)$ matrix  $({\Bar{A}})^{\bar{}}$ defined by 
\begin{align*}
\Bar{\Bar{A}} 
= &
{\small 
\setcounter{MaxMatrixCols}{15}
\begin{bmatrix}
A(1,1)   &\cdots &A(1,N-1)  &A(1,N)    &0          &0     &0     &0          &0     &0\\ 
\vdots   &       &\vdots    &\vdots    &\vdots     &\vdots&\vdots&\vdots     &\vdots&\vdots \\
A(N-1,1) &\cdots &A(N-1,N-1)&A(N-1,N)  &0          &0     & 0    &0          &0     &0\\ 
A(N,1)   &\cdots &A(N,N-1)  &A(N,N)    &0          &1     &0     &0          &0     &0 \\
0        &\cdots &0         &1         &0          &0     &0     &0          &0     &0 \\
0        &\cdots &0         &0         &1          &0     &1     &0          &0     &0 \\
0        &\cdots &0         &0         &0          &1     &1     &0          &1     &0 \\
0        &\cdots &0         &0         &0          &0     &1     &0          &0     &0 \\
0        &\cdots &0         &0         &0          &0     &0     &1          &0     &1 \\
0        &\cdots &0         &0         &0          &0     &0     &0          &1     &1   
\end{bmatrix} }\\
= &
{\small 
\setcounter{MaxMatrixCols}{15}
\begin{bmatrix}
         &       &           &           &0          &0     &0 \\ 
         &       &           &           &\vdots     &\vdots&\vdots \\ 
         &\text{\huge{$\Bar{A}$}}&  &           &0          &0     &0 \\   
         &       &           &           &0          &1     &0 \\ 
0        &\cdots &0          &1          &0          &0     &0   \\ 
0        &\cdots &0          &0          &1          &0     &1  \\ 
0        &\cdots &0          &0          &0          &1     &1    
\end{bmatrix}
}
\end{align*}
Then there exists an isomorphism 
$\Phi: \BF({\Bar{\Bar{A}}}^t) \to \BF({A}^t)
$
%$\Phi: \Z^{N+6}/(I_{N+6}- B^t)\Z^{N+6} \to \Z^{N}/(I_{N}-{A}^t)\Z^{N}$
such that 
\begin{equation*}
\Phi([1_{N+6}]) = [1_N]
\quad 
\text{  and }
\quad 
\det(I_{N+6}- {\Bar{\Bar{A}}}) = \det(I_N - A).
\end{equation*}
Hence we have 
%\begin{equation*}
$(X_{\Bar{\Bar{A}}}, \sigma_{\Bar{\Bar{A}}}) \underset{\COE}{\sim} (X_A,\sigma_A). $
%\end{equation*}
\end{lemma}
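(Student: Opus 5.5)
The plan is to apply the previous lemma twice, once to $A$ and once to $\Bar{A}$. First I check that $\Bar{A}$ satisfies the hypotheses of that lemma. Its entries lie in $\{0,1\}$. It is not a permutation matrix, since its last row has two $1$'s. It is irreducible because $A$ is irreducible, and the new vertices $N+1,N+2,N+3$ are joined to the vertex $N$ by the paths
\begin{equation*}
N \to N+2 \to N+1 \to N
\quad\text{ and }\quad
N+2 \to N+3 \to N+2 .
\end{equation*}
So every new vertex reaches $N$ and is reached from $N$, and the strongly connected component of $A$ absorbs them all. Next I compare the displayed $(N+6)\times(N+6)$ matrix with the formula for $\Bar{A}$, with $A$ replaced by $\Bar{A}$ of size $N+3$. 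The last row of $\Bar{A}$ is $(0,\dots,0,1,1)$. The construction adds a $1$ in position $(N+3, N+5)$ and the three new rows
\begin{equation*}
e_{N+3}, \quad e_{N+4}+e_{N+6}, \quad e_{N+5}+e_{N+6}.
\end{equation*}
These agree with the displayed entries, so the displayed matrix is exactly $(\Bar{A})^{\bar{}}$.

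Applying the previous lemma to $\Bar{A}$ gives an isomorphism $\Psi_{\Bar{A}}: \BF(\Bar{\Bar{A}}^t) \to \BF(\Bar{A}^t)$ with $\Psi_{\Bar{A}}([1_{N+6}]) = [1_{N+3}]$ and $\det(I_{N+6}-\Bar{\Bar{A}}) = -\det(I_{N+3}-\Bar{A})$. Applying it to $A$ gives $\Psi_A$ from $\BF(\Bar{A}^t)$ to $\BF(A^t)$ with $[1_{N+3}]\mapsto[1_N]$ and $\det(I_{N+3}-\Bar{A}) = -\det(I_N-A)$. Set $\Phi = \Psi_A\circ\Psi_{\Bar{A}}$. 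Then $\Phi([1_{N+6}])=[1_N]$, and the two sign changes cancel, so $\det(I_{N+6}-\Bar{\Bar{A}}) = \det(I_N-A)$.

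For the final conclusion I use the classification recalled in the introduction from \cite{MMKyoto}, \cite{Ro} and \cite{Cuntz80}. The triple $(\BF(A^t),[1_N],\det(I_N-A))$ is a complete invariant of continuous orbit equivalence among irreducible non-permutation $\{0,1\}$-matrices. This gives $(X_{\Bar{\Bar{A}}},\sigma_{\Bar{\Bar{A}}}) \underset{\COE}{\sim} (X_A,\sigma_A)$. Applying the invariant needs $\Bar{\Bar{A}}$ to be irreducible and non-permutation. This follows from the same reasoning as for $\Bar{A}$, since $\Bar{A}$ is irreducible and non-permutation.

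The only real obstacle is bookkeeping. One must confirm that the displayed matrix is literally $(\Bar{A})^{\bar{}}$, with the connecting $1$ placed in the row of the last vertex $N+3$ of $\Bar{A}$. One must also confirm that irreducibility and the non-permutation property persist, so that the previous lemma and the classification theorem both apply.
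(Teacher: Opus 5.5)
Your proposal is correct and follows the paper's route. The paper obtains this lemma directly from the preceding one ("Therefore we have the following lemma"): apply it to $A$ and then to $\Bar{A}$, compose the two isomorphisms so that the two sign changes of the determinant cancel, and conclude by the classification of continuous orbit equivalence. Your explicit checks fill in details the paper leaves implicit: that $\Bar{A}$ (and hence $\Bar{\Bar{A}}$) is irreducible and non-permutation, and that the displayed matrix is exactly the bar construction applied to $\Bar{A}$.
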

We note that there exist nonzero diagonal entries of the above matrix ${\Bar{\Bar{A}}}$.
\begin{proposition}\label{prop:main1}
For an irreducible non-permutation matrix $A$ with entries in $\{0,1\}$
and a positive integer $n \in \N$, 
there exists  an irreducible non-permutation matrix $B$ with entries in $\{0,1\}$
such that 
\begin{equation*}
(X_A, \sigma_A) \underset{\COE}{\sim} (X_B,\sigma_B) \quad \text{ and }\quad
\lambda_B \geq n 
\end{equation*}
where 
$\lambda_B$ is the  Perron--Frobenius eigenvalue of $B$. 
\end{proposition}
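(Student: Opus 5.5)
The plan is to reach a large Perron--Frobenius eigenvalue in two stages. First I build a \emph{nonnegative integer} matrix $C$ that has the same invariants $(\BF(C^t),[1],\det(I-C))$ as $A$ and a large eigenvalue. Then I pass to the $\{0,1\}$-matrix $B=C^{[2]}$ using Lemma \ref{lem:Outsplit}.

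To get a diagonal entry to work with, I replace $A$ by $A_0:=\Bar{\Bar{A}}$ from Lemma \ref{lem:doubleBarA}. It is irreducible with entries in $\{0,1\}$, it is continuously orbit equivalent to $A$ with matching invariants, and it has a vertex $k$ with $A_0(k,k)=1$ (e.g.\ $k=N+6$). Write $r_k$ for the $k$-th row of $A_0$. The vector $r_k-e_k^t$ is nonnegative, and it is nonzero: by irreducibility there is some $l\neq k$ with $A_0(k,l)=1$, for instance $l=N+5$. For a positive integer $m$ I set
\begin{equation*}
C = A_0 + m\, e_k (r_k - e_k^t),
\end{equation*}
which is a nonnegative integer matrix. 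The key identity is
\begin{equation*}
I - C = (I - m e_k e_k^t)(I - A_0) + (\text{correction}).
\end{equation*}
This needs checking; to avoid a correction term I will instead use a row $j\neq k$ with $j\to k$, or simply verify directly. Since $e_k^t(I-A_0) = e_k^t - r_k$, we have
\begin{equation*}
I-C = (I-A_0) + m e_k e_k^t (I-A_0) = (I + m e_k e_k^t)(I-A_0).
\end{equation*}
However, $I+me_ke_k^t$ has determinant $1+m$, which is not unimodular. So I use $j\neq k$ instead:
\begin{equation*}
C = A_0 + m\, e_j(r_k - e_k^t), \qquad I-C = (I + m e_j e_k^t)(I-A_0).
\end{equation*}
Here $I+me_je_k^t$ is unimodular with determinant $1$. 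Hence $\det(I-C)=\det(I-A_0)=\det(I_N-A)$. Transposing gives $I-C^t=(I-A_0^t)(I+me_ke_j^t)$, which has the same image as $I-A_0^t$. Therefore $\BF(C^t)=\BF(A_0^t)$ via the identity map, and $[1]$ is preserved.

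Next I check irreducibility and the eigenvalue. Since $C\ge A_0$ entrywise, $C$ is irreducible, and $C$ is clearly not a permutation matrix. For the eigenvalue, $C(j,l)\ge m$ with $l\ne k$. By irreducibility there is a path of some length $p-1$ from $l$ back to $j$, with $p$ independent of $m$. Then $(C^{p})(j,j)\ge m$, so $\lambda_C^{p}\ge m$. Choosing $m\ge n^{p}$ gives $\lambda_C\ge n$.

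Finally I take $B=C^{[2]}$, which is a $\{0,1\}$-matrix. The part I expect to need the most care is justifying that Lemma \ref{lem:Outsplit} applies to nonnegative integer matrices rather than only $\{0,1\}$-matrices. Its proof does: $D^t1_N=1_M$ because each column of $D$ has exactly one $1$, $\det$ and $\lambda$ are unchanged, and $C^{[2]}$ is irreducible and non-permutation because $C$ is irreducible and essential with $\lambda_C>1$. With this, the invariants of $B$ agree with those of $A$ and $\lambda_B=\lambda_C\ge n$. The classification theorem of \cite{MMKyoto} then gives $(X_A,\sigma_A)\underset{\COE}{\sim}(X_B,\sigma_B)$.
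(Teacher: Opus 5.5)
Your proof is correct and is essentially the paper's own: after passing to $\Bar{\Bar{A}}$, the paper adds $n$ times the $p$th row of $\widetilde{A}=A-I_N$ to the $q$th row, which is exactly your $C=A_0+m\,e_j(r_k-e_k^t)$ with $I-C=(I+m e_j e_k^t)(I-A_0)$, and then passes to the second higher block matrix via Lemma \ref{lem:Outsplit}. The only difference is that the paper takes the target row $q$ with $A(p,q)=1$ (your $j=l$), so $C(q,q)\ge n$ directly and $m=n$ suffices, which avoids your return-cycle estimate on $\lambda_C$.
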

\begin{proof}
Let $A=[A(i,j)]_{i,j=1}^N$ be an irreducible non-permutation matrix  with entries in $\{0,1\}$.
By considering $\Bar{\Bar{A}}$ in Lemma \ref{lem:doubleBarA} instead of $A$, 
we may assume that  $A(p,p) =1$ for some $p =1,\dots,N$.
Put the matrix $\widetilde{A}:= A -I_N$, so that $\widetilde{A}(p,p) =0.$
Take an arbitrary fixed positive integer $n$.  
Since $A$ is irreducible, there exists $q\ne p$ such that $A(p,q) =1$ and hence $\widetilde{A}(p,q) =1$.
Let $\widetilde{A}^{\prime}$ be the $N \times N$ matrix obtained from $\widetilde{A}$
by adding $n$ times $p$th row to the $q$th row. 
The matrix $\widetilde{A}^{\prime}$ is of the form
\begin{equation*}
\widetilde{A}^{\prime} = P\cdot \widetilde{A}
\end{equation*}  
for the $N\times N$ invertible matrix $P=[P(i,j)]_{i,j=1}^N\in GL(N,\Z)$ defined by
\begin{equation*}
P(i,j) = 
\begin{cases}
1 & \text{ if } i=j, \\
n & \text{ if } (i, j) = (q,p),\\
0 & \text{ otherwise.}
\end{cases}
\end{equation*}
The matrix  
 $\widetilde{A}^{\prime} + I_N$ is an irreducible non-permutation matrix
 having its entries in nonnegative integers,
 because $(\widetilde{A}^{\prime} + I_N)(i,j) \ge A(i,j)$ for all $i,j=1,\dots,N$.
Define the matrix $B$ to be the second higher block matrix $(\widetilde{A}^{\prime}+I_N)^{[2]}$
of $\widetilde{A}^{\prime} +I_N$.
Let $M$ denote the size of the matrix $B$, so that $B$ is an $M\times M$ irreducible non-permutation matrix with entries in $\{0,1\}$. 
By using Lemma \ref{lem:Outsplit}, we have an isomorphism
$\Phi: \BF(A^t) \to \BF(B^t)$
such that $\Phi([1_N]) = [1_M]$.
Since $B$ is the second higher block matrix of 
$\widetilde{A}^\prime + I_N$, 
we have
$\det(I_M -B) =\det(I_N -(\widetilde{A}^\prime + I_N)).$
By noting 
 $\det(P) = 1$, the equalities 
\begin{equation*}
\det(I_N-A) 
=\det(-\widetilde{A}) 
= \det(-\widetilde{A}^\prime) 
= \det(I_N -(\widetilde{A}^\prime + I_N)) 
=\det(I_M - B) 
\end{equation*}
hold.
We thus conclude that 
$(X_B, \sigma_B) \underset{\COE}{\sim} (X_A,\sigma_A).$
Let us denote by $\lambda_B$ and $\lambda_{\widetilde{A}^\prime + I_N}$ 
the Perron--Frobenius  eigenvalues of the matrices 
$B$ and $\widetilde{A}^\prime + I_N$, respectively,
so that 
$\lambda_B = \lambda_{\widetilde{A}^\prime + I_N}$.
As
$(\widetilde{A}^{\prime} + I_N)(q,q) \ge n$,
the subgraph with the single vertex $q$ has at least $n$ multiple loops, so that  
$
\lambda_{\widetilde{A}^{\prime} + I_N} \ge n 
$
 and hence
 $ 
\lambda_B \ge n.
$
We therefore end the proof of Proposition \ref{prop:main1}.
\end{proof}
It is well-known as Parry's theorem \cite{Parry} that the topological entropy $h_{\operatorname{top}}(X_A, \sigma_A)$
of a topological Markov shift $(X_A,\sigma_A)$ is computed to be $\log \lambda_A$
(cf. \cite[Theorem 4.3.1]{LM}). 
We thus obtain the following theorem.
\begin{theorem}\label{thm:main1}
For an irreducible non-permutation matrix $A$ with entries in $\{0,1\}$,
and $R>0$, 
there exists  an irreducible non-permutation matrix $B$ with entries in $\{0,1\}$
such that 
$$
(X_A, \sigma_A) \underset{\COE}{\sim} (X_B,\sigma_B) \quad \text{ and }\quad
h_{\operatorname{top}}(X_B, \sigma_B) >R. 
$$
\end{theorem}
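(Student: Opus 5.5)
The plan is to deduce Theorem \ref{thm:main1} directly from Proposition \ref{prop:main1} together with Parry's theorem. Given $R>0$, I will choose a positive integer $n$ with $n > e^R$; for instance $n = \lfloor e^R \rfloor + 1$. Since $e^R>1$, this integer satisfies $n\ge 2$, so $\log n > R$ is a strict inequality.

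Next I apply Proposition \ref{prop:main1} to $A$ and this $n$. It produces an irreducible non-permutation matrix $B$ with entries in $\{0,1\}$ such that $(X_A,\sigma_A)\underset{\COE}{\sim}(X_B,\sigma_B)$ and $\lambda_B\ge n$.

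Finally, Parry's theorem \cite{Parry} (cf. \cite[Theorem 4.3.1]{LM}) gives $h_{\operatorname{top}}(X_B,\sigma_B)=\log\lambda_B$. Because the logarithm is monotone, this yields $h_{\operatorname{top}}(X_B,\sigma_B)\ge \log n > R$, which is the claim.

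I do not expect any real obstacle, since all the substantive work is contained in Proposition \ref{prop:main1}. The only point needing care is obtaining a \emph{strict} inequality from $\lambda_B\ge n$. This is handled by choosing $n$ strictly larger than $e^R$ rather than merely at least $e^R$.
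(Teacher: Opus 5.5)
Your proposal is correct and is essentially the paper's own argument: the paper also obtains Theorem \ref{thm:main1} directly from Proposition \ref{prop:main1} together with Parry's formula $h_{\operatorname{top}}(X_B,\sigma_B)=\log\lambda_B$. Your explicit choice $n=\lfloor e^R\rfloor+1>e^R$ correctly gives the strict inequality $h_{\operatorname{top}}(X_B,\sigma_B)\ge\log n>R$, a step the paper leaves implicit.
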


%%%%%%%%%%%%%%%%%%%%%%%%%%%%%%%%%%%%%%%%%%%
\section{Lower entropy}\label{sec:Lower}
%%%%%%%%%%%%%%%%%%%%%%%%%%%%%%%%%%%%%%%%%%%%%
%%%%%%%%%%%%%%%%%%%%%%%%%%%%%%%%%%%%%%%%%%%%%

In this section,
we study lower entropies in a continuous orbit equivalence class.
In what follows, we prove that 
 a one-sided topological Markov shift 
$(X_A, \sigma_A)$ is continuously orbit equivalent to  
a topological Markov shift  $(X_B, \sigma_B)$
whose topological entropy $h_{\top}(X_B, \sigma_B)$
is less than an arbitrary  prescribed positive real number. 
Throughout the section, 
let $A=[A(i,j)]_{i,j=1}^N$ 
be an irreducible non-permutation matrix with entries in $\{0,1\}$.
\begin{lemma}\label{lem:A^{<m>}}
For $m \in \N$, define the $Nm \times Nm$-matrix $A^{<m>}$ 
as the block matrix:
\begin{equation}\label{eq:matrixA^{<m>}}
A^{<m>}: =
\begin{bmatrix}
0     & I_N  & 0    &\cdots& 0 \\
0     & 0    & I_N  &\ddots& \vdots \\
\vdots&\ddots&\ddots&\ddots& 0 \\
0     &\cdots&0     &0     &I_N \\
A     &0     &\cdots&0     & 0 
\end{bmatrix},
\end{equation}
where $0$s in the above matrix denote the $N\times N$ zero matrices.
Then we have
\begin{enumerate}
\renewcommand{\theenumi}{(\roman{enumi})}
\renewcommand{\labelenumi}{\textup{\theenumi}}
\item $(\BF(A^{<m> t}), [1_{Nm}]) \cong
       (\BF(A^t), m [1_N]).$
\item $\det(I_{Nm} - A^{<m>}) = \det(I_N - A).$
\item $\lambda_{A^{<m>}} = \lambda_A^{\frac{1}{m}}.$
\end{enumerate}
\end{lemma}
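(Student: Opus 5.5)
We prove the three items separately; all are direct computations with the block structure of $A^{<m>}$.

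For (i), write a vector in $\Z^{Nm}$ as $(x_1,\dots,x_m)$ with $x_k\in\Z^N$. The transpose is
\begin{equation*}
A^{<m>t}=
\begin{bmatrix}
0 & 0 & \cdots & 0 & A^t\\
I_N & 0 & \cdots & 0 & 0\\
0 & I_N & \ddots & & \vdots\\
\vdots & & \ddots & 0 & 0\\
0 & \cdots & 0 & I_N & 0
\end{bmatrix}.
\end{equation*}
Hence $(I-A^{<m>t})(x_1,\dots,x_m)=(x_1-A^tx_m,\ x_2-x_1,\ \dots,\ x_m-x_{m-1})$.

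Define $\Theta:\Z^{Nm}\to\Z^N$ by $\Theta(x_1,\dots,x_m)=x_1+\cdots+x_m$. Then $\Theta$ maps the image of $I-A^{<m>t}$ onto $(I_N-A^t)\Z^N$: the sum above telescopes to $(I_N-A^t)x_m$, and every element of $(I_N-A^t)\Z^N$ is hit by taking $x_1=\cdots=x_m$.

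So $\Theta$ induces a surjection $\BF(A^{<m>t})\to\BF(A^t)$ sending $[1_{Nm}]$ to $m[1_N]$. For injectivity, suppose $x_1+\cdots+x_m=(I_N-A^t)y$. Modulo the image of $I-A^{<m>t}$, the relation $x_k\equiv x_{k-1}$ moves each block into the last slot, so $(x_1,\dots,x_m)\equiv(0,\dots,0,\sum_k x_k)$. Also $(0,\dots,0,(I_N-A^t)y)=(I-A^{<m>t})(y,\dots,y)$. Together these show the class is zero. (Equivalently, column operations reduce $I-A^{<m>t}$ to $I_{N(m-1)}\oplus(I_N-A^t)$.) I expect tracking the unit class through these operations to be the only point needing care, and the map $\Theta$ handles it explicitly.

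For (ii), use block elimination. The matrix $I-A^{<m>}$ has $I_N$ on the diagonal blocks, $-I_N$ on the superdiagonal, and $-A$ in the bottom-left corner. Adding block column $1$ to column $2$, then column $2$ to column $3$, and so on (right-multiplication by unimodular block matrices, so the determinant is unchanged) clears the superdiagonal. This turns the last diagonal block into $I_N-A$, while the other entries of the last row stay $-A$ in earlier columns. The result is block lower triangular with diagonal blocks $I_N,\dots,I_N,I_N-A$, so the determinant is $\det(I_N-A)$.

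For (iii), compute $(A^{<m>})^m=\operatorname{diag}(A,\dots,A)$ up to a cyclic ordering. Each block diagonal entry equals $A$, because a path of length $m$ in the associated graph cycles once through the $m$ layers and picks up exactly one factor $A$. Hence the spectral radius satisfies $\rho(A^{<m>})^m=\rho(A)$.

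Next, $A^{<m>}$ is irreducible. From layer $k$ one moves identically to layer $k+1$, and the last layer returns to the first via $A$, so the graph is strongly connected by irreducibility of $A$. It is also a non-permutation matrix, since $A$ is. Therefore $\lambda_{A^{<m>}}=\rho(A^{<m>})=\lambda_A^{1/m}$.
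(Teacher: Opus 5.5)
Your proof is correct. Part (ii) is essentially the paper's block elimination, but (i) and (iii) take a different route.

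\textbf{Part (i).} The paper applies row and column operations to $(I_{Nm}-A^{<m>t},[1_{Nm}])$. The row operations carry the unit vector to $1_N+(m-1)A^t1_N$ in the first block, and one still needs $A^t1_N\equiv 1_N$ in $\BF(A^t)$ to reach $m[1_N]$. Your summation map $\Theta$ gives the isomorphism directly and sends $1_{Nm}$ to $m1_N$ exactly. In fact it induces the same isomorphism as the paper's reduction: the paper's composite is $x\mapsto x_1+A^t(x_2+\cdots+x_m)$, which differs from $\Theta(x)$ by $(I_N-A^t)(x_2+\cdots+x_m)$.

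\textbf{Part (iii).} The paper takes a positive eigenvector $(v_1,\dots,v_m)$ of $A^{<m>}$, reads off $Av_1=\lambda_{A^{<m>}}^m v_1$, and invokes uniqueness of the positive eigenvector of $A$. You instead use $(A^{<m>})^m=A\oplus\cdots\oplus A$ together with $\rho(M^m)=\rho(M)^m$; the block form is exact, so no cyclic reordering is needed. Your version explicitly verifies that $A^{<m>}$ is irreducible. The paper's argument tacitly needs this for the positive eigenvector of $A^{<m>}$ to exist, and it is also needed when $A^{<m>}$ is later used as a Markov shift.

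\textbf{Two slips in (i) to fix.}
\begin{enumerate}
\item You claim $(I-A^{<m>t})(y,\dots,y)=(0,\dots,0,(I_N-A^t)y)$, but in fact $(I-A^{<m>t})(y,\dots,y)=((I_N-A^t)y,0,\dots,0)$. Either shift this vector to the last slot using the relation you already used, or use the exact preimage $(A^ty,\dots,A^ty,y)$, which does map to $(0,\dots,0,(I_N-A^t)y)$.
\item The parenthetical claim that column operations alone reduce $I-A^{<m>t}$ to $I_{N(m-1)}\oplus(I_N-A^t)$ is false as stated. Column operations do not change the image, which by your own argument is $\Theta^{-1}((I_N-A^t)\Z^N)$. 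That image contains every vector whose blocks sum to $0$, so it is not of the form $\Z^{N(m-1)}\oplus(I_N-A^t)\Z^N$ when $m\ge 2$ and $\BF(A^t)\neq 0$. Row operations are needed as well, and they are exactly what moves the unit vector in the paper's computation.
\end{enumerate}
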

\begin{proof}
(i), (ii)
We have the following sequence of elementary operations of matrices:
{\allowdisplaybreaks
\begin{align*}
& (I_{Nm}- A^{<m> t}, [1_{Nm}]) \\
= &
\left(
\begin{bmatrix}
I_N   & 0    &\cdots&     0& -A^t\\
-I_N  & I_N  & 0    &\cdots& 0 \\
0     &\ddots&\ddots&\ddots& \vdots \\
\vdots&\ddots&-I_N  &I_N   &0   \\
0     &\cdots& 0    &-I_N  & I_N
\end{bmatrix}, \,
\begin{bmatrix}
1_N \\
1_N \\
\vdots \\
\vdots \\
1_N \\
\end{bmatrix}
\right) \text{ (add $m$th column to $(m-1)$th column)} \\
\to &
\left(
\begin{bmatrix}
I_N   & 0    &\cdots&-A^t  & -A^t\\
-I_N  & I_N  & 0    &\cdots& 0 \\
0     &\ddots&\ddots&\ddots& \vdots \\
\vdots&\ddots&-I_N  &I_N   &0   \\
0     &\cdots & 0    &0     & I_N
\end{bmatrix}, \,
\begin{bmatrix}
1_N \\
1_N \\
\vdots \\
\vdots \\
1_N \\
\end{bmatrix}
\right) \text{ (add $(m-1)$th column to $(m-2)$th column)} \\
\to & \cdots \\
\to &
\left(
\begin{bmatrix}
I_N-A^t&-A^t &\cdots&-A^t  & -A^t\\
0     & I_N  & 0    &\cdots& 0 \\
0     &\ddots&\ddots&\ddots& \vdots \\
\vdots&\ddots& 0    &I_N   & 0 \\
0     &\cdots& 0    &0     & I_N
\end{bmatrix}, \,
\begin{bmatrix}
1_N \\
1_N \\
\vdots \\
\vdots \\
1_N \\
\end{bmatrix}
\right) \text{ (add $A^t\times m$th row to $1$th row)}\\
\to &
\left(
\begin{bmatrix}
I_N-A^t&-A^t &\cdots&-A^t  & 0\\
0     & I_N  & 0    &\cdots& 0 \\
0     &\ddots&\ddots&\ddots& \vdots \\
\vdots&\ddots& 0    &I_N   & 0 \\
0     &\cdots& 0    &0     & I_N
\end{bmatrix}, \,
\begin{bmatrix}
1_N + A^t1_N\\
1_N \\
\vdots \\
\vdots \\
1_N \\
\end{bmatrix}
\right)  \text{ (add $A^t\times (m-1)$th row to $1$th row)}\\
\to & \cdots \\
\to &
\left(
\begin{bmatrix}
I_N-A^t&0    &\cdots&0    & 0\\
0     & I_N  & 0    &\cdots& 0 \\
0     &\ddots&\ddots&\ddots& \vdots \\
\vdots&\ddots& 0    &I_N   & 0 \\
0     &\cdots& 0    &0     & I_N
\end{bmatrix}, \,
\begin{bmatrix}
1_N +(m-1)A^t 1_N\\
1_N \\
\vdots \\
\vdots \\
1_N \\
\end{bmatrix}
\right).
\end{align*}
}
Hence we have
\begin{align*}
      (\BF(A^{<m> t}), [1_{Nm}]) 
\cong  (\BF(A^t), [1_N + (m-1)A^t 1_N]) 
\cong  (\BF(A^t), m [1_N]).
\end{align*}
The above matrix operations also shows that 
$\det(I_{Nm} - A^{<m>}) = \det(I_N - A).$

(iii)
Let $\lambda_{A^{<m>}}$ be the Perron--Frobenius eigenvalue of $A^{<m>}$.
One may find a sequence of positive vectors $v_i\in {\mathbb{R}}^N,\, i=1,\dots,m$
such that 
$$
A^{<m>}
\begin{bmatrix}
v_1 \\
v_2 \\
\vdots \\
v_m
\end{bmatrix} 
=\lambda_{A^{<m>}}
\begin{bmatrix}
v_1 \\
v_2 \\
\vdots \\
v_m
\end{bmatrix}
\quad 
\text{ so that }
\quad
\begin{bmatrix}
v_2 \\
v_3 \\
\vdots \\
v_m\\
Av_1
\end{bmatrix} 
=
\begin{bmatrix}
\lambda_{A^{<m>}} v_1 \\
\lambda_{A^{<m>}} v_2 \\
\vdots \\
\lambda_{A^{<m>}} v_{m-1} \\
\lambda_{A^{<m>}} v_m
\end{bmatrix}. 
$$
This shows that  
$$
Av_1 = (\lambda_{A^{<m>}})^m v_1.
$$
Hence $v_1$ is a positive eigenvector of $A$  for the positive eigenvalue 
$(\lambda_{A^{<m>}})^m$ which is the Perron-Frobenius eigenvalue $\lambda_A$ for the matrix $A$,
because of the uniqueness of the Perron-Frobenius eigenvalue.
\end{proof}

By using the above lemma, we have the following proposition.
\begin{proposition}\label{prop:A^{<m>}}
Let $A=[A(i,j)]_{i,j=1}^N$ 
be an irreducible non-permutation matrix with entries in $\{0,1\}$.
Assume that $[1_N]$ is a torsion element in the quotient group
$\BF(A^t)$.
Then there exists a sequence $A_n, \, n \in \N$ 
of irreducible non-permutation matrices with entries in $\{0,1\}$
such that
\begin{equation}\label{eq:theorem2}
\begin{cases}
\bullet & (X_A,\sigma_A) \underset{\COE}{\sim} (X_{A_n}, \sigma_{A_n}), \quad n \in \N, \\
\bullet & \lambda_{A_n} \downarrow 1.
\end{cases}
\end{equation}
\end{proposition}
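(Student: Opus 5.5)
Since $[1_N]$ is a torsion element of $\BF(A^t)$, let $k\in\N$ be its order, so that $k[1_N]=0$ in $\BF(A^t)$. The plan is to take $m_n := nk+1$ for $n\in\N$ and set $A_n := A^{<m_n>}$ as in Lemma \ref{lem:A^{<m>}}. The point of choosing $m_n \equiv 1 \pmod k$ is that it makes $m_n[1_N]=[1_N]$, which is exactly what is needed to preserve the unit class under the construction of Lemma \ref{lem:A^{<m>}}.

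The first step is to check that $A_n$ is admissible. Its entries are in $\{0,1\}$ because $A$ and $I_N$ are $0$--$1$ matrices and the blocks occupy disjoint positions. For irreducibility, note that the graph of $A^{<m>}$ consists of $m$ layers of copies of the vertex set of $A$. Each layer $r<m$ passes identically to layer $r+1$, and layer $m$ returns to layer $1$ via the edges of $A$. Hence a path in $\G_A$ from $i$ to $j$ lifts to a path from $(i,r)$ to $(j,s)$ for any layers $r,s$, obtained by going around the cycle of layers enough times. For the non-permutation property, since $A$ is irreducible and not a permutation matrix, some row of $A$ has at least two $1$s. The corresponding row of $A^{<m>}$, which lies in the last block row, then also has at least two $1$s.

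The second step is to compute the invariants using Lemma \ref{lem:A^{<m>}}. By (i) and the choice of $m_n$,
\begin{equation*}
(\BF(A_n^t),[1_{Nm_n}]) \cong (\BF(A^t), m_n[1_N]) = (\BF(A^t), [1_N]+nk[1_N]) = (\BF(A^t),[1_N]).
\end{equation*}
By (ii), $\det(I_{Nm_n}-A_n)=\det(I_N-A)$. By the classification of \cite{MMKyoto} recalled in the introduction, these two facts together give $(X_A,\sigma_A)\underset{\COE}{\sim}(X_{A_n},\sigma_{A_n})$.

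The third step is the entropy computation. By (iii), $\lambda_{A_n}=\lambda_A^{1/(nk+1)}$. Here $\lambda_A>1$, since $A$ is an irreducible non-permutation $0$--$1$ matrix. The sequence $\lambda_A^{1/(nk+1)}$ is therefore strictly decreasing in $n$ and converges to $1$, which gives $\lambda_{A_n}\downarrow 1$.

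The only real obstacle is the irreducibility and non-permutation check for $A^{<m>}$. It is routine, but it is needed because Lemma \ref{lem:A^{<m>}} does not assert it. The rest of the argument is a direct application of Lemma \ref{lem:A^{<m>}} together with the torsion hypothesis.
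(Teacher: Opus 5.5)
Your proposal is correct and follows the paper's proof: the paper also takes $\ell$ with $\ell[1_N]=0$, sets $A_n=A^{<1+n\ell>}$, and applies parts (i)--(iii) of the lemma on $A^{<m>}$ to get the unit class, the determinant, and $\lambda_{A_n}=\lambda_A^{1/(1+n\ell)}\downarrow 1$. Your check that $A^{<m_n>}$ is an irreducible non-permutation $0$--$1$ matrix is correct. The paper leaves this implicit, although it is needed both for the classification theorem and for the Perron--Frobenius argument behind part (iii).
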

\begin{proof}
Assume that $[1_N]$ is a torsion element in the quotient group
$\BF(A^t)$.
Hence one may find $\ell \in \N$ such that $ \ell [1_N] =0$ in $\Z^N/( I_N - A^t) \Z^N$.
For any $n \in \N$, put
$m_n = 1 + n \ell$, so that we have
\begin{equation*}
m_n[1_N] = [1_N] + n \ell[1_N] = [1_N].
\end{equation*}
Lemma \ref{lem:A^{<m>}} tells us that 
\begin{equation*}
(\BF(A^{<m_n> t}), [1_{Nm_n}]) 
\cong  (\BF(A^t), m_n [1_N])
\cong  (\BF(A^t), [1_N]).
\end{equation*}
Since $\det(I_{Nm_n} - A^{<m_n>}) = \det(I_N - A),$
by putting $A_n :=  A^{<m_n>}$, we have
$$
(X_A,\sigma_A) \underset{\COE}{\sim} (X_{A_n}, \sigma_{A_n}), \quad n \in \N.
$$
Since
$\lambda_{A^{<m_n>}} = \lambda_A^{\frac{1}{m_n}},$
we get 
$\lambda_{A_n} \downarrow 1.$
\end{proof}

%%%%%%%%%%%%%%%%%%%%%%%%%

The above proof does not work in case that $[1_N]$ is non-torsion in $\BF(A^t)$, 
so we have to provide several lemmas to treat general cases. 
We use the operation in matrices called the Parry-Sullivan move (\cite{PS}).
Let $A=[A(i,j)]_{i,j=1}^N$ 
be an irreducible non-permutation matrix with entries in $\{0,1\}$.
Take an arbitrary fixed $p \in \{1,\dots,N\}$,
and put
the new vertex set $\Sigma_{[p]} = \{1,\dots, N \} \cup \{p'\}$.
Define the matrix 
$A_{[p]} =[A_{[p]}(i,j)]_{i,j \in \Sigma_{[p]}}$ over $\Sigma_{[p]}$ by
setting
\begin{equation*}
A_{[p]}(i,j)
=
\begin{cases}
A(i,j) & \text{ for }  i\ne p, p'  \text{ and } j\ne p',\\
A(i,j) & \text{ for }  i=p' \text{ and } j\ne p', \\
1      & \text{ for }  i=p \text{ and } j=p', \\
0      & \text{ for }  i=p \text{ and } j\ne p',  \\
0      & \text{ for }  i \ne p \text{ and } j =p',
\end{cases}
\end{equation*}
so that 
\begin{equation*}
A_{[p]}
=
\begin{bmatrix}
A(1,1)  &\cdots & A(1,p) & 0    & A(1,p+1) &\cdots & A(1,N) \\ 
\vdots  &       & \vdots &\vdots&\vdots    &       & \vdots \\
A(p-1,1)&\cdots &A(p-1,p)& 0    &A(p-1,p+1)&\cdots &A(p-1,N) \\ 
0       &\cdots &0       & 1    &0         &\cdots &0        \\ 
A(p,1)  &\cdots &A(p,p)  & 0    &A(p,p+1)  &\cdots &A(p,N)   \\ 
\vdots  &       & \vdots &\vdots&\vdots    &       & \vdots \\ 
A(N,1)  &\cdots & A(N,p) & 0    & A(N,p+1) &\cdots & A(N,N) 
\end{bmatrix}.
\end{equation*}

\begin{lemma}\label{lem:PSmove1}
$\lambda_{A_{[p]}} < \lambda_A$.
\end{lemma}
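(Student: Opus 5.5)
The approach is a Perron--Frobenius comparison argument. We build from the Perron eigenvector of $A$ a positive test vector $u$ on $\Sigma_{[p]}$ with $A_{[p]}u \le \lambda_A u$ entrywise and strict inequality in at least one entry. Pairing with a positive left eigenvector of $A_{[p]}$ then forces $\lambda_{A_{[p]}}<\lambda_A$. The intuition is that $A_{[p]}$ is obtained from the graph $\G_A$ by splitting $p$ into $p$ and $p'$. The vertex $p$ keeps all incoming edges, $p'$ carries all outgoing edges, and there is a single edge $p\to p'$. So every cycle through $p$ becomes one step longer, which should strictly lower the growth rate.

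\emph{Step 1: $A_{[p]}$ is irreducible.} Any path in $\G_A$ lifts to a path in $\G_{A_{[p]}}$ by replacing each visit to $p$ (other than a terminal one) by the detour $p\to p'$. Every vertex of $\Sigma_{[p]}$ lies on such a lift, since $p'$ is reached from $p$ and $p$ is reached exactly as in $\G_A$. Hence $A_{[p]}$ is irreducible, and it has a positive left eigenvector $w$ for $\lambda_{A_{[p]}}$.

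\emph{Step 2: the test vector.} Let $v>0$ satisfy $Av=\lambda v$ with $\lambda=\lambda_A$. Define $u$ on $\Sigma_{[p]}$ by
\begin{equation*}
u_i=v_i\ (i\ne p,p'),\qquad u_{p'}=v_p,\qquad u_p=v_p/\lambda .
\end{equation*}
We check the three kinds of rows of $A_{[p]}u$ against $\lambda u$.
\begin{itemize}
\item Row $p$: it has its only $1$ in column $p'$, so $(A_{[p]}u)_p=v_p=\lambda u_p$.
\item Row $p'$: $(A_{[p]}u)_{p'}=\sum_{j\ne p}A(p,j)v_j+A(p,p)v_p/\lambda\le \lambda v_p=\lambda u_{p'}$.
\item Row $i\ne p,p'$: $(A_{[p]}u)_i=\sum_{j\ne p}A(i,j)v_j+A(i,p)v_p/\lambda\le\lambda v_i=\lambda u_i$.
\end{itemize}
Both inequalities use $v_p/\lambda<v_p$, which holds because $\lambda>1$ for an irreducible non-permutation matrix. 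Irreducibility of $A$ gives some $i$ with $A(i,p)=1$. If $i\ne p$ the corresponding row inequality is strict; if $i=p$ the row-$p'$ inequality is strict. So $A_{[p]}u\le\lambda u$ with $A_{[p]}u\ne\lambda u$.

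\emph{Step 3: conclusion.} Pairing with $w>0$ gives
\begin{equation*}
\lambda_{A_{[p]}}\,(w\cdot u)=w\cdot A_{[p]}u<\lambda\,(w\cdot u),
\end{equation*}
where the inequality is strict because $w$ is strictly positive and $\lambda u-A_{[p]}u\ge0$ is nonzero. Since $w\cdot u>0$, this yields $\lambda_{A_{[p]}}<\lambda_A$.

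The only delicate points are the irreducibility of $A_{[p]}$, which is needed for the strictly positive left eigenvector, and locating the strict entry; both are handled above. I expect no further obstacle.
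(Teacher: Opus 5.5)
Your proof is correct, but it is organized differently from the paper's. The paper uses the vector with $u_p=v_p$ and $u_{p'}=\lambda_A v_p$ (all other coordinates $v_i$). It does not estimate $A_{[p]}u$ directly. Instead it enlarges $A_{[p]}$ to a matrix $A'_{[p]}$ that differs only by the entry $\lambda_A-1>0$ at position $(p',p')$. For $A'_{[p]}$ this vector is an exact positive eigenvector, so $\lambda_{A'_{[p]}}=\lambda_A$. The paper then cites strict monotonicity of the Perron--Frobenius eigenvalue (Seneta, Theorem 1.5(c)) for $A_{[p]}\le A'_{[p]}$, $A_{[p]}\ne A'_{[p]}$, $A'_{[p]}$ irreducible, to get $\lambda_{A_{[p]}}<\lambda_A$.

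You normalize instead by $u_{p'}=v_p$ and $u_p=v_p/\lambda$, obtain a strictly subinvariant positive vector for $A_{[p]}$ itself, and conclude by pairing with a positive left Perron vector. This is exactly the argument that proves the monotonicity theorem, so your version is self-contained rather than relying on the citation. Both routes need irreducibility of $A_{[p]}$ (equivalently of $A'_{[p]}$, which has the same zero pattern apart from the $(p',p')$ entry). You prove it in Step 1; the paper only asserts it.

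The one thing your normalization costs is the case split to locate a strict row (an in-neighbour $i\ne p$ of $p$ versus a loop at $p$). This can be avoided. With the paper's normalization, $\lambda_A u - A_{[p]}u=\lambda_A(\lambda_A-1)v_p\,e_{p'}$. The defect then sits entirely, and strictly, in coordinate $p'$ whether or not $A(p,p)=1$, and your pairing argument would go through verbatim.
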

\begin{proof}
Let $v=[v_i]_{i=1}^N$ be a positive eigenvector for the Perron-Frobenius eigenvalue 
$\lambda_A$
so that $Av =\lambda_Av$.
We then have the equality:
\begin{equation*}
{\footnotesize
\begin{bmatrix}
A(1,1)  &\cdots & A(1,p) & 0         & A(1,p+1) &\cdots & A(1,N) \\ 
\vdots  &       & \vdots &\vdots     &\vdots    &       & \vdots \\
A(p-1,1)&\cdots &A(p-1,p)& 0         &A(p-1,p+1)&\cdots &A(p-1,N) \\ 
0       &\cdots &0       & 1         &0         &\cdots &0        \\ 
A(p,1)  &\cdots &A(p,p)  &\lambda_A-1&A(p,p+1)  &\cdots &A(p,N)   \\ 
A(p+1,1)&\cdots &A(p+1,p)&0          &A(p+1,p+1)&\cdots &A(p+1,N)\\ 
\vdots  &       & \vdots &\vdots     &\vdots    &       & \vdots \\ 
A(N,1)  &\cdots & A(N,p) & 0         & A(N,p+1) &\cdots & A(N,N) 
\end{bmatrix}
\begin{bmatrix}
v_1 \\ 
\vdots \\
v_{p-1} \\ 
v_p    \\ 
\lambda_A v_{p} \\ 
v_{p+1}\\
\vdots \\ 
v_{N} 
\end{bmatrix}
=
\lambda_A
\begin{bmatrix}
v_1 \\ 
\vdots \\
v_{p-1} \\ 
v_p     \\ 
\lambda_Av_{p} \\ 
v_{p+1}\\
\vdots \\ 
v_{N} 
\end{bmatrix}.
}
\end{equation*}
Let us denote by $A'_{[p]}$ 
the above $(N+1)\times (N+1)$ matrix in the left hand side.
It differs from $A_{[p]}$ only at $(p+1,p+1)$-component.
We note that both of the matrices 
$A_{[p]}$ and $A'_{[p]}$ are irreducible, because so is the matrix $A$.
By the above equality, we see that $\lambda_A$ 
is the Perron-Frobenius eigenvalue of $A'_{[p]}$,
because  the vector in the right-hand is a positive vector.
As $0< \lambda_A -1 $ and hence $A_{[p]}< A'_{[p]}$, 
we know that  $\lambda_{A_{[p]}} < \lambda_{A'_{[p]}} =\lambda_A$ by
\cite[Theorem 1.5 (c)]{Seneta}.
\end{proof}
It is clear that $\det(I_{N+1}-A_{[p]})=\det(I_N-A)$. 

The proof of the following lemma is essentially seen in the proof of 
\cite[Theorem 1.3]{BF}, so we omit its proof.
\begin{lemma}\label{lem:PSmove2}
The  map
$\eta: \Z^{N+1}\to \Z^N$ 
defined by 
\begin{equation*}
\eta([x_1,\dots, x_{p-1}, x_p, x_{p'}, x_{p+1},\dots, x_N]) 
=[x_1,\dots, x_{p-1}, x_{p}+x_{p'}, x_{p+1}, x_{p+2}, \dots, x_N]) 
\end{equation*}
induces an isomorphism 
$\bar{\eta}: \BF(A_{[p]}^t) \to \BF(A^t)$ so that 
\begin{equation*}
\eta([1_{N+1}]) 
=[(\overbrace{1,\dots,1}^{\text{$p-1$ times}}, 
2,\overbrace{1,\dots,1}^{\text{$N-p$ times}})] 
\end{equation*}
\end{lemma}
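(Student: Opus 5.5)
The plan is to verify directly that $\eta$ maps $(I_{N+1}-A_{[p]}^t)\Z^{N+1}$ onto $(I_N-A^t)\Z^N$. Since $\eta$ is surjective, this gives an isomorphism on the quotients, and the formula for $\eta([1_{N+1}])$ is then immediate. Throughout we write vectors in $\Z^{N+1}$ with coordinates indexed by $\Sigma_{[p]}$.

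\textbf{Columns of $I_{N+1}-A_{[p]}^t$.} The image of $I_{N+1}-A_{[p]}^t$ is spanned by its columns $c_i$, $i\in\Sigma_{[p]}$. Here $c_i=e_i-\sum_j A_{[p]}(i,j)e_j$ is the $i$th row of $I_{N+1}-A_{[p]}$, transposed. From the definition of $A_{[p]}$:
\begin{itemize}
\item for $i\ne p,p'$, the row of $A_{[p]}$ is the row of $A$ placed in the coordinates $\{1,\dots,N\}$, with $0$ at $p'$;
\item for $i=p'$, the row is the $p$th row of $A$;
\item for $i=p$, the row is $e_{p'}$.
\end{itemize}
Applying $\eta$, which adds the $p'$-coordinate to the $p$-coordinate, gives:
\begin{align*}
\eta(c_i)&=e_i-A^te_i \quad (i\ne p,p'), &
\eta(c_p)&=e_p-e_p=0, &
\eta(c_{p'})&=e_p-A^te_p.
\end{align*}
Hence $\eta\bigl((I_{N+1}-A_{[p]}^t)\Z^{N+1}\bigr)=(I_N-A^t)\Z^N$, so $\eta$ induces a well-defined surjection $\bar\eta:\BF(A_{[p]}^t)\to\BF(A^t)$.

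\textbf{Injectivity.} This is the only step needing care. Suppose $\eta(x)\in(I_N-A^t)\Z^N$, say $\eta(x)=\sum_i a_i\,\eta(c_{i^*})$, where $i^*=i$ for $i\ne p$ and $p^*=p'$. Put $y=x-\sum_i a_ic_{i^*}$. Then $y\in\ker\eta$, so $y=k(e_p-e_{p'})$ for some $k\in\Z$, since $\ker\eta=\Z(e_p-e_{p'})$.

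Next we show $e_{p'}-e_p$ lies in the image. Indeed $c_p=e_p-e_{p'}$ by the third bullet above, so $y=k\,c_p$. Therefore $x\in(I_{N+1}-A_{[p]}^t)\Z^{N+1}$, and $\bar\eta$ is injective.

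\textbf{The unit.} Finally, $\eta(1_{N+1})$ has a $2$ in the $p$th coordinate and $1$ elsewhere, as stated. The main point to watch in the whole argument is keeping the index conventions of $A_{[p]}$ straight, in particular that the row $p$ of $A_{[p]}$ is $e_{p'}$ and that row $p'$ carries the old $p$th row of $A$.
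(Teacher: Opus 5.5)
Your proof is correct and complete. The column computations $\eta(c_i)=(I_N-A^t)e_i$ for $i\ne p,p'$, $\eta(c_{p'})=(I_N-A^t)e_p$ and $\eta(c_p)=0$ give $\eta\bigl((I_{N+1}-A_{[p]}^t)\Z^{N+1}\bigr)=(I_N-A^t)\Z^N$. Since $\ker\eta=\Z(e_p-e_{p'})=\Z c_p$ already lies in $(I_{N+1}-A_{[p]}^t)\Z^{N+1}$, the induced map is injective.

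The paper gives no proof of this lemma; it only points to the proof of Theorem 1.3 of Bowen--Franks. A proof in that spirit, matching the computations the paper does carry out for $\Bar{A}$ and $A^{<m>}$, would use unimodular row and column operations. Adding row $p'$ to row $p$ of $I_{N+1}-A_{[p]}^t$ turns column $p$ into $-e_{p'}$ and column $p'$ into $(I_N-A^t)e_p+e_{p'}$. One column operation then gives $(I_N-A^t)\oplus(-1)$ up to a column permutation. That row operation, followed by discarding the $p'$ coordinate, is exactly $\eta$.

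Your route instead checks directly that $\eta$ carries the relation subgroup onto the relation subgroup and has its kernel inside it. This shows at once that the isomorphism is $\bar\eta$, with no need to identify the row operation with $\eta$. The operations approach, in exchange, yields $\det(I_{N+1}-A_{[p]})=\det(I_N-A)$ at the same time. The paper asserts that identity just before the lemma, but it is not part of this statement.
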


By using the above lemma repeatedly
together with
Lemma \ref{lem:A^{<m>}}
and Lemma \ref{lem:PSmove1},
 we have the following lemma.
\begin{lemma}\label{lem:A_{[p],k}}
For any $p \in \{1,\dots,N\}$ and $k\in \Zp$, 
there exists an $(N+k)\times (N+k)$ matrix $A_{[p],k}$ with entries in $\{0,1\}$ 
such that 
there exists an isomorphism
$\bar{\eta}: \BF(A_{[p],k}^t) \to \BF(A^t)$ such that
\begin{gather*}
\bar{\eta}([1_{N+k}]) 
=[(\overbrace{1,\dots,1}^{\text{$p-1$ times}}, 
k+1,\overbrace{1,\dots,1}^{\text{$N-p$ times}})], \\
\det(I_{N+k} - A_{[p],k}) = \det(I_N - A), \qquad
0< \lambda_{A_{[p],k}} < \lambda_A.
\end{gather*}
\end{lemma}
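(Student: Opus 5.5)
We build $A_{[p],k}$ by induction on $k$, applying the Parry--Sullivan move $k$ times at the same vertex. For $k=0$ we put $A_{[p],0}=A$ and take $\bar\eta$ to be the identity. The only defect in this base case is the strict inequality $\lambda_{A_{[p],0}}<\lambda_A$, which is false. So we either read the statement for $k\ge 1$, or treat $k=0$ separately with $\lambda_{A_{[p],0}}\le\lambda_A$. The substantive content is the case $k\ge1$.

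For the inductive step, suppose $A_{[p],k}$ has been constructed on the vertex set $\{1,\dots,N\}\cup\{p',p'',\dots\}$, with $k$ added vertices. We arrange the construction so that the vertex currently carrying the coordinate $k+1$ is a single vertex $p_k$. This vertex receives exactly the incoming edges that originally entered $p$, and its unique outgoing edge goes to the next vertex of the chain $p_k\to p_{k-1}\to\cdots\to p'$, whose last vertex emits the original row $A(p,\cdot)$. We then apply the Parry--Sullivan move to $A_{[p],k}$ at $p_k$ and define $A_{[p],k+1}:=(A_{[p],k})_{[p_k]}$. Concretely, this lengthens by one the chain of forced transitions inserted before the original row of $p$. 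The new matrix again has entries in $\{0,1\}$, is irreducible (a path through $p$ in $\G_A$ simply becomes longer), and is not a permutation matrix, since the other rows of $A$ are unchanged.

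The invariants follow from the earlier results. By the remark after Lemma \ref{lem:PSmove1}, $\det(I_{N+k+1}-A_{[p],k+1})=\det(I_{N+k}-A_{[p],k})$, and induction gives $\det(I_N-A)$. Lemma \ref{lem:PSmove2} supplies an isomorphism $\BF(A_{[p],k+1}^t)\to\BF(A_{[p],k}^t)$. Composing it with the inductive isomorphism gives the required $\bar\eta$.

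We track the image of $[1_{N+k+1}]$ through this composition. The first map sums the coordinates at the two vertices created from $p_k$, so the all-ones vector goes to the all-ones vector except for a $2$ at $p_k$. We need the inductive isomorphism to send this vector to the vector with $k+2$ at $p$. To secure this, we strengthen the induction hypothesis: the isomorphism at stage $k$ is induced by the linear map $\Z^{N+k}\to\Z^N$ that adds all the chain coordinates into coordinate $p$. This map is simply the composite of the maps $\eta$ of Lemma \ref{lem:PSmove2}. Under it, $[1_{N+k+1}]$ goes to a vector with entry $1+(k+1)=k+2$ at $p$ and $1$ elsewhere, which is the required formula.

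For the entropy, Lemma \ref{lem:PSmove1} applied to the irreducible matrix $A_{[p],k}$ gives $\lambda_{A_{[p],k+1}}<\lambda_{A_{[p],k}}\le\lambda_A$. Positivity follows because $A_{[p],k+1}$ is an irreducible non-permutation nonnegative matrix, so its Perron eigenvalue is at least $1$.

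The main obstacle is the bookkeeping in the inductive step. We must check that the move at the correct vertex preserves the $\{0,1\}$ entries and irreducibility, and that the composite of the $\eta$'s places all the extra mass on coordinate $p$. The rest is immediate from Lemmas \ref{lem:PSmove1} and \ref{lem:PSmove2}.
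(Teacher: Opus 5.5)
Your proof is correct and follows the paper's route. You iterate the Parry--Sullivan move at $p$, compose the maps $\eta$ of Lemma \ref{lem:PSmove2} so that all chain coordinates are added into coordinate $p$, carry the determinant along by the remark after Lemma \ref{lem:PSmove1}, and obtain the strict drop in $\lambda$ from Lemma \ref{lem:PSmove1} applied to each irreducible non-permutation intermediate matrix. Your caveat is also right: the strict inequality $\lambda_{A_{[p],k}}<\lambda_A$ cannot hold for $k=0$ in general, so the lemma should be read for $k\ge 1$ or with $\le$ at $k=0$; this is harmless in Proposition \ref{prop:main2}, which only needs $\lambda_B<1+\epsilon$.
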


We provide  one more lemma.
\begin{lemma}\label{lem:unitadjustment}
Let $e_i \in \Z^N$ be the vector whose $i$th component is one, 
and the other components are zeros.
Let $[e_i]$ be the class of $e_i$ in $\BF(A^t)$.
Then for any element $u \in \BF(A^t)$, there exist positive integers 
$m_1, \dots, m_N \in \N$ such that 
\begin{equation}\label{eq:um_1m_n}
u = m_1[e_1] + \cdots +m_N [e_N].
\end{equation}
\end{lemma}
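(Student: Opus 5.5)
First write $u$ with nonnegative coefficients, then shift all coefficients up by a fixed positive combination that represents $0$ in $\BF(A^t)$.

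Since $\BF(A^t)=\Z^N/(I_N-A^t)\Z^N$ is generated by the classes $[e_1],\dots,[e_N]$, we can write $u = c_1[e_1]+\cdots+c_N[e_N]$ with integers $c_i\in\Z$, possibly negative.

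The key is a vector $w=(w_1,\dots,w_N)$ of positive integers with $[w]=0$ in $\BF(A^t)$. Any vector in the image $(I_N-A^t)\Z^N$ represents zero, but that image need not contain a positive vector. So I would use the relations coming from the columns of $I_N-A^t$ differently. For each $j$ we have $[e_j]=[A^te_j]=\sum_i A(j,i)[e_i]$. Iterating gives
\[
[e_j]=\sum_i A^k(j,i)[e_i] \quad \text{for every } k\ge 1 .
\]
Because $A$ is irreducible, for each pair $(j,i)$ some power $A^k(j,i)$ is positive. Summing over $k=1,\dots,N$ gives $N[e_j]=\sum_i S(j,i)[e_i]$, where $S=\sum_{k=1}^N A^k$ has all entries positive. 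Hence $-N[e_j]$ is expressed in terms of the $[e_i]$.

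The cleaner route is to use this to rewrite negatives. A negative term $-[e_j]$ becomes $-[e_j]=[e_j]-2[e_j]$, which does not help directly. Instead, use the fact that $A$ is a non-permutation matrix, so some row $j_0$ has at least two $1$'s. Combined with irreducibility, this forces the Perron eigenvalue $\lambda_A>1$ and hence $A^k\to\infty$ entrywise.

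For a fixed $j$, take $k$ large enough that $A^k(j,i)\ge 2$ for all $i$. This may require primitivity, so with period $d$ we instead use the sum of $d$ consecutive powers, $T=A^k+\cdots+A^{k+d-1}$. Then
\[
d[e_j]=\sum_i T(j,i)[e_i] \quad \text{with } T(j,i)\ge 2 \text{ for all } i .
\]
It follows that $0=\sum_i T(j,i)[e_i]-d[e_j]$, and the coefficient of $[e_j]$ is $T(j,j)-d$. Choosing $k$ large, with $T(j,j)>d$, this coefficient is positive. Summing these relations over $j$ gives a relation $0=\sum_i w_i[e_i]$ with every $w_i\ge 1$.

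Finally set $m_i=c_i+Lw_i$ with $L\in\N$ large enough that every $m_i\ge 1$. Then $\sum_i m_i[e_i]=u+L\cdot 0=u$, which is \eqref{eq:um_1m_n}.

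The main obstacle is the growth step: showing that the entries of $T$ can be made arbitrarily large. This follows from $\lambda_A>1$ for an irreducible non-permutation matrix together with Perron–Frobenius theory on the cyclic classes. To avoid the periodicity bookkeeping, I would first try to get a positive relation using only irreducibility, for instance by taking $\sum_{k=1}^{K}A^k$ minus $K$ times the identity for large $K$.
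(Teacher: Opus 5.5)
Your argument is correct and is essentially the paper's: the paper also builds a strictly positive integer vector representing $0$, namely $(A^t)^{pn}e-e$ with $e=e_1+\cdots+e_N$, $p$ the period and $n$ large enough that the primitive blocks of $A^{pn}$ have all entries $\ge 2$, and then adds a large multiple of it. Your vector $T^te_j-de_j$, built from $d$ consecutive powers, plays exactly the same role. Your passing claim that $A^k\to\infty$ entrywise is false for periodic $A$, but you repair it correctly by passing to the period.
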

\begin{proof}
Let $p$ be the period of the matrix $A$. 
Then $A^p$ is permutation-similar to a direct sum of 
$p$ primitive matrices, say 
$B_1\oplus B_2 \oplus \cdots \oplus B_p$.
There exists $n\in \N$ such that every entry
of $B_i^n$ is not less than $2$ 
for all $i = 1,2,\dots,p$. 
For 
$e := e_1 + e_2 +\cdots+ e_N,$
we have
\begin{equation*}
0 =[({(A^t)}^{pn}- I_N)e] =[{(A^t)}^{pn} e -e ] \quad \text{ in } \BF(A^t),
\end{equation*}
and the vector ${(A^t)}^{pn} e -e $ 
is a linear combination of ${e_i}^\prime$ s
with positive coefficients. 
Hence for any $a_1 e_1 + a_2 e_2 +\cdots+ a_N e_N \in \Z^N$, 
by taking large enough
$k \in \N$ and adding $k({(A^t)}^{pn}e -e)$, 
we may find a vector in $\N^N$ within the same
equivalence class in $\BF(A^t)$.
\end{proof}

We thus have the following proposition.
\begin{proposition}\label{prop:main2}
Let $A=[A(i,j)]_{i,j=1}^N$ 
be an irreducible non-permutation matrix with entries in $\{0,1\}$.
For any positive real number $\epsilon$,
there exists an irreducible non-permutation matrix $B$ with entries in $\{0,1\}$
 such that
\begin{equation*} 
 (X_A,\sigma_A) \underset{\COE}{\sim} (X_B, \sigma_B)
 \quad
 \text{ and }
 \quad
 1 < \lambda_B <1+\epsilon.
\end{equation*}
\end{proposition}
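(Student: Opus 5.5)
Fix $\epsilon>0$ and combine Lemma \ref{lem:A^{<m>}} with Lemmas \ref{lem:A_{[p],k}} and \ref{lem:unitadjustment}. The idea is to pass to $A^{<m>}$ for a large $m$, which pushes the Perron--Frobenius eigenvalue down to $\lambda_A^{1/m}$. This changes the unit class to $m[1_N]$, and we then use Parry--Sullivan moves, which only lower the eigenvalue, to move the unit class back to $[1_N]$.

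First choose $m\in\N$ with $\lambda_A^{1/m}<1+\epsilon$ and put $C=A^{<m>}$. By Lemma \ref{lem:A^{<m>}}, $C$ is an irreducible non-permutation $\{0,1\}$-matrix of size $Nm$ with
\[
\det(I_{Nm}-C)=\det(I_N-A),\qquad \lambda_C=\lambda_A^{1/m}<1+\epsilon,
\]
and $(\BF(C^t),[1_{Nm}])\cong(\BF(A^t),m[1_N])$. Let $\theta:\BF(C^t)\to\BF(A^t)$ denote this isomorphism. The target is a class $u\in\BF(C^t)$ with $\theta(u)=[1_N]$.

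Next apply Lemma \ref{lem:unitadjustment} to $C$ for the element $u-[1_{Nm}]$. This gives positive integers $m_1,\dots,m_{Nm}$ with
\[
u=\sum_i (1+m_i)[e_i],
\]
so $u$ is represented by a vector all of whose coordinates are at least $2$. Then apply Parry--Sullivan moves vertex by vertex: at vertex $i$ perform $k_i=m_i$ moves, as in Lemma \ref{lem:A_{[p],k}}. Iterating the lemma at successive vertices, the resulting matrix $B$ has size $Nm+\sum_i m_i$ and entries in $\{0,1\}$. By Lemma \ref{lem:PSmove2} there is an isomorphism $\BF(B^t)\to\BF(C^t)$ sending $[1]$ to the class of the vector whose $i$th coordinate is $1+m_i$, which is $u$. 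Composing with $\theta$ sends $[1]$ to $[1_N]$. Each move preserves $\det(I-\cdot)$, so $\det(I-B)=\det(I_N-A)$, and by the invariant $(\BF,[1],\det)$ we get $(X_B,\sigma_B)\underset{\COE}{\sim}(X_A,\sigma_A)$.

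It remains to check the eigenvalue and the standing hypotheses on $B$.
\begin{itemize}
\item Irreducibility of $B$ is clear, since each move inserts a vertex on a path.
\item The iterated Lemma \ref{lem:PSmove1} gives $\lambda_B\le\lambda_C<1+\epsilon$.
\item $B$ is not a permutation matrix, since a move preserves out-degrees greater than $1$.
\item An irreducible non-permutation $\{0,1\}$-matrix has $\lambda_B>1$.
\end{itemize}

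The step I expect to be the main obstacle is bookkeeping the iterated moves. When moves are performed at several different vertices, or repeatedly at the same vertex, we must confirm that the map of Lemma \ref{lem:PSmove2} composes correctly, so that the unit vector of $B$ maps exactly to $(1+m_1,\dots,1+m_{Nm})$. I would handle this by induction on the number of moves, tracking that each move at vertex $i$ adds exactly $1$ to the $i$th coordinate of the image of the unit vector. Since Lemma \ref{lem:A_{[p],k}} already covers one vertex, applying it successively at vertices $1,2,\dots$ of the growing matrix (the original vertices persist) suffices.
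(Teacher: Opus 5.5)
Your proof is correct. It follows the paper's overall strategy, but it drops one preliminary step.

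The paper first replaces $A$ by an auxiliary matrix $A_\circ$ with $\BF(A_\circ^t)\cong\BF(A^t)$, $\det(I_{N_\circ}-A_\circ)=\det(I_N-A)$ and $[1_{N_\circ}]=0$. This matrix is imported from Lemma 3.7 of the Matsumoto--Matui classification paper, so that $A_\circ^{<m>}$ is continuously orbit equivalent to $A_\circ$ for every $m$. Only then does the paper apply the unit adjustment lemma and the Parry--Sullivan moves to $A_\circ^{<m>}$.

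You apply the $\langle m\rangle$-construction to $A$ itself and absorb the unit shift $[1_N]\mapsto m[1_N]$ into the target class $u=\theta^{-1}([1_N])$. This is legitimate because the paper's concluding step uses only three properties of the intermediate matrix:
\begin{enumerate}
\item its Bowen--Franks group is abstractly isomorphic to $\BF(A^t)$;
\item $\det(I-\cdot)=\det(I_N-A)$;
\item its Perron--Frobenius eigenvalue is below $1+\epsilon$.
\end{enumerate}
$A^{<m>}$ already has all three, and the unit class is re-chosen afterwards by Lemma \ref{lem:unitadjustment} in either version. Your route therefore removes the dependence on the external lemma. The paper's detour through $A_\circ$ mirrors the torsion-case argument but plays no role in the final step.

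Your smaller deviations are harmless. Applying Lemma \ref{lem:unitadjustment} to $u-[1_{Nm}]$ gives coefficients $\geq 2$, so every $k_i\geq 1$; the paper instead takes $k_i=m_i-1\geq 0$. Your induction on the coordinate-summing maps $\eta$ of Lemma \ref{lem:PSmove2} supplies exactly the bookkeeping the paper leaves implicit when it applies the one-vertex lemma ``at each $i$''. You also check that irreducibility, the non-permutation property and $\lambda_B>1$ survive the moves, which the paper asserts without comment.
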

\begin{proof}
For the matrix $A=[A(i,j)]_{i,j=1}^N$,
there exists an irreducible non-permutation matrix 
$A_\circ=[A_\circ(i,j)]_{i,j=1}^{N_\circ}$ with entries in $\{0,1\}$ 
such that 
\begin{equation*}
\BF(A_\circ^t) \cong \BF(A^t), \quad
\det(I_{N_\circ}-A_{\circ}) = \det(I_N - A), 
\quad [1_{N_\circ}] =0 \text{ in } \BF(A_\circ^t)
\end{equation*}
(cf. \cite[Lemma 3.7]{MMKyoto}). %so that 
%$
%(X_{A_\circ}, \sigma_{A_\circ}) 
%\underset{\COE}{\sim}
%(X_A, \sigma_A).
%$
For any $\epsilon>0$, by Lemma \ref{lem:A^{<m>}}, one may find $m \in \N$ such that 
\begin{equation*}
(X_{A_\circ^{<m>}}, \sigma_{A_\circ^{<m>}}) 
\underset{\COE}{\sim}
(X_{A_\circ}, \sigma_{A_\circ}),
\qquad
 1< \lambda_{A_\circ^{<m>}} <1+\epsilon,
\end{equation*}
because $m [1_{N_\circ}] = 0.$
Put $N_1 = m N_\circ$. 
As $\BF(A_\circ^{<m> t}) \cong \BF(A^t)$ 
Lemma \ref{lem:unitadjustment} tells us that
there exist $m_1,\dots, m_{N_1} \in \N$ such that 
\begin{equation*}
(\BF(A_\circ^{<m> t}), m_1[e_1] + \cdots + m_{N_1} [e_{N_1}])
\cong
(\BF(A^t), [1_N]).
\end{equation*}
By using Lemma \ref{lem:A_{[p],k}} at each $i=1,\dots, N_1$,
one may find an irreducible non-permutation matrix $B=[B(i,j)]_{i,j=1}^M$ with entries in $\{0,1\}$
such that 
\begin{equation*}
(\BF(B^t), [1_M]) \cong (\BF(A^t), [1_N]), 
\quad
\det( I_M - B) = \det(I_N - A), 
\quad
1<\lambda_B < \lambda_{A_\circ^{<m>}}, 
\end{equation*}
so that 
$(X_B, \sigma_B) 
\underset{\COE}{\sim}
(X_A, \sigma_A)
$ 
and
$ 1< \lambda_B <1+\epsilon.
$
\end{proof}

We thus reach the following theorem.
\begin{theorem}\label{thm:main2}
Let $A$ be an irreducible non-permutation matrix with entries in $\{0,1\}$.
For any positive real number $r$,
there exists an irreducible non-permutation matrix $B$ with entries in $\{0,1\}$
 such that
\begin{equation*} 
 (X_A,\sigma_A) \underset{\COE}{\sim} (X_B, \sigma_B)
 \quad
 \text{ and }
 \quad
 h_{\top}(X_B,\sigma_B) <r.
\end{equation*}
\end{theorem}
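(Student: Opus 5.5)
The theorem follows from Proposition \ref{prop:main2} together with Parry's theorem, exactly as Theorem \ref{thm:main1} was deduced from Proposition \ref{prop:main1}. Given $r>0$, I will choose $\epsilon>0$ with $\log(1+\epsilon)<r$; for instance $\epsilon = e^r - 1$, which is positive since $r>0$.

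Proposition \ref{prop:main2} applied to $A$ and this $\epsilon$ gives an irreducible non-permutation matrix $B$ with entries in $\{0,1\}$ such that $(X_A,\sigma_A)\underset{\COE}{\sim}(X_B,\sigma_B)$ and $1<\lambda_B<1+\epsilon$. By Parry's theorem, $h_{\top}(X_B,\sigma_B)=\log\lambda_B$, and since $\log$ is strictly increasing we get
\begin{equation*}
h_{\top}(X_B,\sigma_B) = \log \lambda_B < \log(1+\epsilon) = r .
\end{equation*}
Since $\lambda_B>1$, this entropy is in addition positive.

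There is no real obstacle at this stage, because all the substantive work has already been done in Proposition \ref{prop:main2}. That work consists of three steps: reducing to a matrix whose unit class $[1_{N_\circ}]$ vanishes, so that Lemma \ref{lem:A^{<m>}} can push $\lambda$ toward $1$; restoring the correct unit class via Lemma \ref{lem:unitadjustment}; and applying the Parry--Sullivan moves of Lemma \ref{lem:A_{[p],k}}, which only decrease $\lambda$. The only point to check is that the $B$ produced there is a genuine irreducible non-permutation $\{0,1\}$-matrix, so that Parry's formula applies and the COE classification by $(\BF(B^t),[1_M],\det(I_M-B))$ from the introduction is legitimate. Both facts are part of the conclusion of Proposition \ref{prop:main2}.
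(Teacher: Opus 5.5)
Your argument is correct and is the paper's own route. The paper also deduces this theorem directly from Proposition \ref{prop:main2} together with Parry's formula $h_{\top}(X_B,\sigma_B)=\log\lambda_B$, and your choice $\epsilon=e^r-1$ is exactly what gives $\log\lambda_B<\log(1+\epsilon)=r$.
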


\medskip

%%%%%%%%%%%%%%%%%%%%%%%%%%%%%
\section{Full shifts}
%%%%%%%%%%%%%%%%%%%%%%%%%%%%%%%%
Let $A =[A(i,j)]_{i,j=1}^N$ be the full $N$-shift with $N >1$
whose entries $A(i,j), \, i,j=1,\dots,N$ are all $1$s.
%%%%%%%%%%%%%%%%%%%%%%%%%%%%%%%%%%%%%%%%%%%%%%%%%%%%%%%%%%%%%%%%%
%%%%%%%%%%%%%%%%%%%%%%%%%%%%%%%%%%%%%%%%%%%%%%%%%%%%%%%%%%%%%%%%%

\medskip

%%%%%%%%%%%%%%%%%%%%%%%%%%%%%%%
{\bf 1. Upper entropy for full shifts.}
%%%%%%%%%%%%%%%%%%%%%%%%%%%%%%%%%% 

Following the proof of Proposition \ref{prop:main1} in Section \ref{sec:Upper},
we concretely construct a one-sided topological Markov shift 
 $(X_{B},\sigma_{B})$ such that 
 $(X_A,\sigma_A) \underset{\COE}{\sim} (X_{B}, \sigma_{B})$
 and $h_{\top}(X_B,\sigma_B)$ is greater than a prescribed integer.
Let $m$ be an arbitrary fixed positive integer.
 Let $p =N$ and $q = N-1$, 
Put the matrix $\widetilde{A}= A -I_N$.
Let $\widetilde{A}^{\prime}$ be the $N \times N$ matrix obtained from $\widetilde{A}$
by adding $m$ times $p$th row to the $q$th row. 
The matrix $x I_N - (\widetilde{A}^{\prime}+I_N) $ is of the form
\begin{equation*}
x I_N - (\widetilde{A}^{\prime} + I_N)
=
{\small
\begin{bmatrix}
x-1   &-1    &-1     & \dots & -1   &-1 \\
-1    & x-1  &-1     & \dots & -1   &-1 \\
\vdots&\ddots& \ddots&\ddots &\vdots&\vdots \\
-1    &\cdots&-1     &x-1    &-1    &-1 \\
-(m+1)&\cdots&\cdots&-(m+1)&x-(m+1)&-1 \\
-1    &\cdots&\cdots &-1     &-1    &x-1 
\end{bmatrix}.
}
\end{equation*}
It is a direct computation to show that 
\begin{equation*}
\det(xI_N - (\widetilde{A}^{\prime} + I_N)) =
x^{N-2}\{x^2 -(m+N)x + m\}
\end{equation*}
and hence
\begin{equation*}
\lambda_{\widetilde{A}^{\prime} + I_N} =
\frac{1}{2}\{ m+N + \sqrt{(m+N)^2 -4m} \}.
\end{equation*}
Let $B$  be the second higher block matrix $(\widetilde{A}^{\prime}+I_N)^{[2]}$
of $\widetilde{A}^{\prime} +I_N$.
As $\lambda_B = \lambda_{\widetilde{A}^{\prime} + I_N}$ % > m + N-1$,
the one-sided topological Markov shift 
 $(X_{B},\sigma_{B})$ is continuously orbit equivalent to the full $N$-shift
  $(X_A,\sigma_A)$ such that $\lambda_B> m + N-1$.

\medskip

%%%%%%%%%%%%%%%%%%%%%%%%%%%%%%%%
{\bf 2. Lower entropy for full shifts.}
%%%%%%%%%%%%%%%%%%%%%

Let $n$ be an arbitrary fixed positive integer.
Put $m_n = 1 + n(N-1).$
Let $A^{<m_n>}$ be the matrix defined by \eqref{eq:matrixA^{<m>}}
for the full $N$-shift $A$ and the positive integer $m_n$.
It is a direct computation to show that 
\begin{equation*}
\det(xI_{Nm_n} - A^{<m_n>}) =x^{Nm_n -m_n}(x^{m_n} - N)
\end{equation*}
and hence
\begin{equation*}
\lambda_{A^{<m_n>}} = N^{\frac{1}{m_n}}
\end{equation*}
Put $B = A^{<m_n>}$.
The 
one-sided topological Markov shift 
 $(X_{B},\sigma_{B})$ is continuously orbit equivalent to the full $N$-shift
  $(X_A,\sigma_A)$ such that $\lambda_B =N^{\frac{1}{m_n}}$.

\bigskip

%%%%%%%%%%%%%%%%%%%%%%%%%%%%%%%%%%%%%%
\medskip

{\it Acknowledgments:}
K. Matsumoto is supported by JSPS KAKENHI Grant Number 24K06775.
H. Matui is supported by JSPS KAKENHI Grant Number 23K22397.

%The authors would like to thank anonymous referees for their careful reading
%of the manuscript and their helpful comments and suggestions.
%The author is supported by JSPS KAKENHI Grant Number 24K06775.

%%%%%%%%%%

\end{document}